\documentclass[12pt]{article}
\usepackage{amssymb,amsmath,enumerate,latexsym,epsf,graphicx}

\newfont{\bb}{msbm10 at 12pt}
\newfont{\bbp}{msbm10 at 9pt}

\def\r{\hbox{\bb R}}

\def\h{\hbox{\bb H}}
\def\tm{\hbox{\bbp M}^2}
\def\mr{\hbox{\bb M}^2\times\hbox{\bb R}}

\def\man{\mathcal{M}}
\def\hm{\mathcal{M}(\kappa , \tau)}

\def\m{\hbox{\bb M}^2}

\def\s{\hbox{\bb S}}

\newcommand{\camb}{\overline{\nabla}}

\newcommand{\norm}[1]{\left\Vert #1 \right\Vert}

\newcommand{\set}[1]{\left\{#1\right\}}
\newcommand{\meta}[2]{\langle #1,#2 \rangle }

\newcommand{\ext}{\wedge}
\newcommand{\campo}{\mathfrak{X}}

\newcommand{\bi}{\partial _{\infty}}

\begin{document}

\newtheorem{theorem}{Theorem}[section]
\newtheorem{proposition}[theorem]{Proposition}
\newtheorem{definition}[theorem]{Definition}
\newtheorem{lemma}[theorem]{Lemma}
\newtheorem{corollary}[theorem]{Corollary}
\newtheorem{remark}[theorem]{Remark}
\newtheorem{claim}[theorem]{Claim}

\newenvironment{proof}{\smallskip\noindent{\it Proof.}\hskip \labelsep}
                        {\hfill\penalty10000\raisebox{-.09em}{$\Box$}\par\medskip}

\title{Locally convex surfaces immersed\\[4mm] in a Killing submersion}

\author{Jos\'{e} M. Espinar\thanks{The author is partially
supported by Spanish MEC-FEDER Grant MTM2007-65249, and Regional J. Andaluc\'{i}a
Grants P06-FQM-01642 and FQM325}, In$\hat{\rm{e}}$s S. de Oliveira$\mbox{}^{\dag}$}
\date{}

\maketitle

\vspace{.4cm}

\noindent $\mbox{}^{\ast}$ Departamento de Geometr\'{i}a y Topolog\'{i}a,
Universidad de Granada, 18071 Granada, Spain; e-mail: jespinar@ugr.es\vspace{0.2cm}

\noindent $\mbox{}^\dag$ Pontif\'{i}cia Universidade Cat\'{o}lica do Rio de Janeiro,
Departamento de Matem\'{a}tica, Rio de janeiro ,Brasil; e-mail:
in.math@hotmail.com\vspace{0.2cm}

%\noindent $\mbox{}^{\ddag}$ Instituto de Matematica Pura y Aplicada, 110 Estrada
%Dona Castorina, Rio de Janeiro 22460-320, Brazil; e-mail: rosen@impa.br

\vspace{.3cm}

\begin{abstract}
We generalize Hadamard-Stoker-Currier Theorems for surfaces immersed in a Killing
submersion over a strictly Hadamard surface whose fibers are the trajectories of a
unit Killing field. We prove that every complete surface whose principal curvatures
are greater than a certain function (depending on the ambient manifold) at each
point, must be properly embedded, homeomorphic to the sphere or to the plane and, in
the latter case, we study the behavior of the end.
\end{abstract}

\section{Introduction}

J. Stoker \cite{S} generalized the result of J.Hadamard \cite{H} that a compact
strictly locally convex surface in the Euclidean 3-space $\mathbb{R}^{3}$ is
embedded and homeomorphic to the sphere. Later on, J. Stoker showed that a complete
strictly locally convex immersed surface in $\mathbb{R}^{3}$ must be embedded and
homeomorphic to the sphere if it is closed or to the plane if it is open. In the
latter case, the surface is a graph over a planar domain. This result is known
currently as the Hadamard-Stoker Theorem.

Also M. Do Carmo and F.Warner \cite{CW} extended Hadamard's Theorem to the
hyperbolic 3-space $\mathbb{H}^{3}$ assuming the surface is compact and has positive
extrinsic curvature. The complete case in $\mathbb{H}^{3}$ was treated by
R.J.Currier in \cite{C}. Currier's Theorem says that a complete immersed surface in
$\mathbb{H}^{3}$ whose principal curvatures are greater than or equal to one, is
embedded and homeomorphic to the sphere if it is closed or to the plane if it is
open.

Recently, J. Espinar, J. G\'{a}lvez and H. Rosenberg \cite{EGR} extended the
Hadamard-Stoker theorem for immersed surfaces in $\mathbb{H}^{2} \times \mathbb{R}$
assuming that such a surface is connected, complete and whose extrinsic curvature is
positive. More precisely they showed that if $\Sigma$ is a complete connected
immersed surface in $\mathbb{H}^{2}\times \mathbb{R}$ with positive extrinsic
curvature, $K_{e} > 0$, then $\Sigma$ is properly embedded. Moreover, $\Sigma$ is
homeomorphic to $\mathbb{S}^{2}$ if it is closed or to $\mathbb{R}^{2}$ if it is
open. In the latter case, $\Sigma$ is a graph over a convex domain of
$\mathbb{H}^{2} \times \{0\}$ or $\Sigma$ has a simple end (we will make explicit
the definition of a simple end in Section \ref{HSTheorem}).

We work in this paper on immersed surfaces in Riemannian $3-$manifold which fiber
over a Riemmanian surface and whose fibers are the trajectories of a unit Killing
vector field. The study of immersed surfaces in such a manifold is a topic of
increasing interest (see \cite{RST} or \cite{LR} and references therein). In
particular, they include the metric product spaces $\mr$ for any Riemannian surface
$\m$, the Heisenberg spaces, the Berger spheres or $\widetilde{{\rm PSL}(2, \r)}$.

As far as we know, locally convex surfaces in $\widetilde{{\rm PSL}(2, \r)}$ have
not yet been studied. We prove a Hadamard-Stoker-Currier type theorem in these
spaces (more in the sense of Currier Theorem, i.e., giving conditions on the
principal curvatures of the surfaces instead of the extrinsic curvature). Actually,
basic problems on locally convex surface remain open, for example, it is not yet
known the classification of complete surfaces with positive extrinsic curvature in
$\widetilde{{\rm PSL}(2,\r)}$. Indeed, we do not know the parametrization of the
rotational spheres with constant extrinsic curvature in these spaces.

We start by establishing the notation and preliminaries results in Section
\ref{preliminaries}. Section \ref{preliminaries} is divided in three parts: the
first one focused on Hadamard surfaces, where we set up the basic notation and some
Lemmas on folitations by geodesics. Afterwards, we study Riemannian submersions over
Riemannian surfaces whose fibers are the trajectories of a unit Killing vector field
$\xi$. We parametrize such manifolds by two functions $\kappa$ and $\tau$, where
$\kappa $ is the curvature of the base. $\tau$ depends on $\xi$ (see Proposition
\ref{Prop:tau}). In the last part of Section \ref{preliminaries}, we study the
geometry of vertical cylinders and we set up the necessary concepts we use later on.

Section \ref{HSTheorem} is devoted to our main results, specifically

\begin{quote}
{\bf Theorem \ref{Theo:HS}} {\it Let $ \Sigma \subset \hm$ be a complete connected
immersed surface so that $k_{i}(p) > |\tau (p)|$ for all $p\in \Sigma$, where $\hm$
is a strict Hadamard-Killing submersion. Then $\Sigma$ is properly embedded.
Moreover, $\Sigma$ is homeomorphic to $\mathbb{S}^{2}$ or to $\mathbb{R}^{2}$. In
the later case, $\Sigma$ has a simple end (see Definition \ref{Def:simpleend}) or
$\Sigma$ is a Killing graph over a convex domain of $\m$.}
\end{quote}

The above Theorem \ref{Theo:HS} generalizes \cite[Theorem 3.1]{EGR}. Moreover, it
can be applied to surfaces in $\widetilde{{\rm PSL}(2,\r)}$.

\section{Preliminaries}\label{preliminaries}

\subsection{On Hadamard surfaces}

Here we will remind some definitions and results that we will need later. For more
details on Hadamard manifolds with non positive sectional curvature see \cite{E}.

Let $\m $ be a Hadamard surface, that is, $\m$ is a complete, simply connected
surface with Gaussian curvature $\kappa \leq 0$.

It is well known that given two points $p,q \in \m$, there exists an unique geodesic
$\gamma_{pq}$ joining $p$ and $q$. We say that two geodesics $\gamma, \beta$ in $\m$
are asymptotic if there exists a constant $C>0$ such that $d(\gamma (t), \beta (t))$
$\leq$ $C$ for all $t$ $>$ 0. To be asymptotic is an equivalence relation on the
oriented unit speed geodesics or on the set of unit vectors of $\m$. We will denote
by $\gamma(+\infty)$ and $\gamma(-\infty)$ the equivalence classes of the geodesics
$t \rightarrow \gamma(t)$ and $t \rightarrow \gamma(-t)$ respectively. Moreover, a
equivalence class is called point at infinity. $\m(\infty)$ denotes the set of all
points at infinity for $\m$ and $\m _{*} = \m\cup\m(\infty)$.

The set $\m _{*} = \m\cup \m(\infty)$ admits a natural topology, called the cone
topology, which makes $\m _{*}$ homeomorphic to the closed $2-$disk in $\r ^{2}$.

%Moreover, the maps $G_{p}$ are homeomorphisms for all $p$ relative
%to the induced topology on $\mathbb {M}(\infty)$ and the usual
%topology on $\mathbb{S}^{1}\subseteq T_{p}(\mathbb{M})\cong
%\mathbb{R}^{2}$.

Let $p,q,r \in \m $ so that $q$ and $r$ are distinct from $p$. Then $\measuredangle
_p(q,r)$ denotes the angle at $p$ subtended by $q$ and $r$, that is, $\measuredangle
_p(q,r)$ is defined to be the angle between $\gamma'_{pq}(0)$ and $\gamma'_{pr}(0)$,
where $\gamma_{pq}$ and $\gamma_{pr}$ are the geodesics joining $p$ to $q$ and $p$
to $r$ respectively. Now, we recall two important results on geodesic triangles.

\begin{itemize}
\item \textbf{Law of cosines:} Let $p,q$ and $r$ be distinct points at $\m$,
and let $a,b,c$ be the lengths of the sides of the geodesic triangle with vertices
$p,q$ and $r$. Let $\alpha, \beta$ and $\gamma$ denote the angles opposite to the
sides of lengths $a,b$ and $c$ respectively. Then:
\begin{enumerate}
\item $c^{2}\geq a^{2} + b^{2} - 2ab\, cos\gamma$;

\item (Double law of cosines) $c \leq b\, cos\alpha + a \, cos\beta$.
\end{enumerate}

\item \textbf{Angle sum theorem:} The sum of the interior angles of a geodesic triangle
in any simply connected manifold $\m$ with $\kappa \leq 0$ is at most $\pi$.
Actually, this follows from the Gauss-Bonnet Theorem.
\end{itemize}

When $\m$ is a Hadamard surface with sectional curvature bounded above by a negative
constant then any two asymptotic geodesics $\gamma, \beta$ satisfy that the distance
between the two curves $\gamma_{|[t, +\infty)},\beta_{|[t, +\infty)} $ is zero for
any $t \in \r$. For each point $p \in \m$  and $x \in \m(\infty)$, there is an
unique geodesic $\gamma_{px}$ with initial condition $\gamma _{px}(0)=p$ and it is
in the equivalence class of $x$. For each point $p \in \m$ we may identify
$\m(\infty)$ with the circle $\mathbb{S}^{1}$ of unit vectors in $T_{p}\m$ by means
of the bijection
\begin{equation*}
\begin{matrix}
G_p : & \s ^1 \subset T_p \m & \to & \m (\infty) \\
 & v & \longmapsto & \lim _{t\to +\infty} \gamma _{p,v}(t)
\end{matrix}
\end{equation*}where $\gamma _{p,v}$ is the geodesic with initial conditions $\gamma _{p,v}(0)=p$
and $\gamma _{p,v}'(0)=v$. In addition the hypothesis on the sectional curvature (it
is bounded above by a negative constant) yields there is an unique geodesic joining
two points of $\m(\infty)$.

Given a set $\Omega \subseteq \m$, we denote by $\partial_{\infty}\Omega$ the set
$\partial \Omega \cap \m(\infty)$,where $\partial \Omega$ is the boundary of
$\Omega$ for the cone topology. We orient $\m$ so that its boundary at infinity is
oriented counter-clockwise.

Let $\alpha$ be a complete oriented geodesic in $\m$, then
\begin{equation*}
\partial_{\infty}\alpha = \{\alpha^{-}, \alpha^{+}\}
\end{equation*}
where $\alpha^{-} = \lim _{t \to -\infty}\hspace{0,2cm}\alpha(t)$ and $\alpha^{+} =
\lim _{t \to + \infty}\hspace{0,2cm}\alpha(t)$. Here $t$ is arc length along
$\alpha$. We identify $\alpha$ with its boundary at infinity, writing $\alpha =
\{\alpha^{-}, \alpha^{+}\}$.

\begin{definition}\label{Def:orientedgeod}
Let $\theta_{1}$ and $\theta_{2}$ $\in \m(\infty)$, we define the oriented geodesic
joining $\theta_{1}$ and $\theta_{2}$, $\alpha (\theta_{1},\theta_{2})$, as the
oriented geodesic from $\theta_{1} \in \m(\infty)$ to  $\theta_{2} \in \m(\infty)$.
\end{definition}

\begin{definition}\label{Def:interiordomain}
Let $\alpha$ a oriented complete geodesic in $\m$. Let $J$ be the standard
counter-clockwise rotation operator. We call exterior set of $\alpha$ in $\m$,
$ext_{\mathbb{M}^{2}}(\alpha)$, the connected component of $\m \setminus \alpha$
towards which $J\alpha'$ points. The other connected component of $\m \setminus
\alpha$ is called the interior set of $\alpha$ in $\m$ and denoted by
$int_{\mathbb{M}^{2}}(\alpha)$.
\end{definition}

Now, we establish a Lemma that will be used later.

\begin{lemma}\label{Lem:basicfoliation}
Let $\m $ be a Hadamard surface. Let $p \in \m  \setminus \alpha$, where $\alpha$ is
a complete geodesic in $\m $ and $q\in \alpha$ such that $d(p,q)= d(p,\alpha)$. Let
$\beta$ be a complete geodesic joining $p$ to $q$, then $\beta$ intersects
orthogonally $\alpha$ in exactly one point. Here, $d$ denotes the distance function.
\end{lemma}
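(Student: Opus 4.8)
The plan is to separate the statement into its two assertions and handle each with a standard tool: orthogonality of $\beta$ and $\alpha$ at $q$ will come from the first variation of arc length, while the fact that $q$ is the \emph{only} intersection point will follow from the uniqueness of geodesics joining two points of $\m$, which is recalled at the start of this subsection. First I would record the preliminary observation that, since $\m$ is a Hadamard surface, its exponential map is a diffeomorphism and geodesics are globally minimizing; consequently the segment of $\beta$ from $p$ to $q$ is a shortest path realizing $d(p,q)$. Because $q$ is chosen with $d(p,q)=d(p,\alpha)$, this segment is in fact a minimizing geodesic from $p$ to the whole geodesic $\alpha$.

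To obtain orthogonality, I would parametrize $\alpha$ by arc length as $\alpha=\alpha(s)$ with $\alpha(s_0)=q$, and consider the function $s\mapsto d(p,\alpha(s))$, which attains its minimum at $s_0$ by the choice of $q$. Applying the first variation formula to the family of minimizing geodesics from the fixed point $p$ to $\alpha(s)$ yields, at $s=s_0$, the value $\langle \beta'(q),\alpha'(s_0)\rangle$ up to sign and normalization, where $\beta'(q)$ denotes the unit tangent of $\beta$ at $q$ pointing away from $p$. Since $s_0$ is a critical point of the distance function, this derivative vanishes, giving $\langle \beta'(q),\alpha'(s_0)\rangle=0$, that is, $\beta\perp\alpha$ at $q$.

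For the count of intersection points I would argue by contradiction. If $\beta$ met $\alpha$ at a second point $q'\neq q$, then both the subarc of $\alpha$ and the subarc of $\beta$ between $q$ and $q'$ would be geodesic segments joining $q$ to $q'$; by the uniqueness of geodesics between two points of $\m$ these segments coincide, forcing $\beta=\alpha$ as complete geodesics. This contradicts $p\in\beta\setminus\alpha$, so $\beta\cap\alpha=\{q\}$, and the intersection at $q$ is orthogonal by the previous step.

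I expect no serious obstacle in this lemma; the only point requiring care is the justification of the first variation step, namely that for $s$ near $s_0$ there is a unique minimizing geodesic from $p$ to $\alpha(s)$, so that $s\mapsto d(p,\alpha(s))$ is smooth and its critical-point condition is meaningful. This is precisely where the Hadamard hypothesis (nonpositive curvature, no conjugate points, diffeomorphic exponential map) is used, and it is also what underlies the uniqueness argument in the final step.
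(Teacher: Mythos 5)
Your proof is correct, but it follows a genuinely different route from the paper's. For the orthogonality at $q$, the paper argues synthetically with the triangle-comparison tools it sets up in this same subsection: it takes an auxiliary point $r\in\alpha$, applies the double law of cosines to the geodesic triangle with vertices $p,q,r$, and lets $r$ tend to $q$, so that non-orthogonality at $q$ forces a point of $\alpha$ strictly closer to $p$ than $q$, contradicting $d(p,q)=d(p,\alpha)$. You instead differentiate $s\mapsto d(p,\alpha(s))$ and invoke the first variation of arc length at the minimum $s_0$ to get $\langle \beta'(q),\alpha'(s_0)\rangle=0$. These are the synthetic and analytic versions of the same criticality statement; the paper's version stays within the law-of-cosines/angle-sum toolkit (and would survive in a merely metric, CAT(0)-type setting), while yours leans on smoothness of the distance function, which you correctly flag and justify from the Hadamard hypotheses (unique geodesics, no conjugate points, exponential map a diffeomorphism). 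A further difference in your favor: the paper's proof only treats orthogonality and leaves the ``exactly one point'' assertion implicit, whereas you prove it explicitly -- two distinct intersection points would give two geodesic segments with the same endpoints, forcing $\beta=\alpha$ and contradicting $p\notin\alpha$. This is the same mechanism the paper uses elsewhere (e.g.\ in Lemma \ref{Lem:folinfinity}, and via the angle sum theorem in Lemma \ref{Lem:folorthogonal}), so your explicit treatment completes the statement as claimed.
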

\begin{proof}
Let $r \in \alpha$ and let $\gamma _{pr}$ be the geodesic joining $p$ to $r$.
Consider the geodesic triangle of vertices $p,q,r$. Set $\varphi := \measuredangle
_q(p,r) $, $\theta := \measuredangle _p(q,r)$ and $\phi :=\measuredangle _r(p,q)$
with lengths of opposite sides $a,b,c$ respectively.

Suppose that $\phi \neq \frac{\pi}{2}$ and suppose, for example, $\phi <
\frac{\pi}{2}$. On the one hand, from the double law of cosines, we have inequality:

\begin{equation*}
c \leq a \, cos\theta + b \, cos\varphi
\end{equation*}

Moving $r$ towards $p$, we get that $\theta \rightarrow 0$ and $\varphi \rightarrow
\pi-\phi$ as $r \rightarrow q$. Therefore we conclude using the above inequality
that

\begin{equation*}
c \leq a + b \, m < a , \text{ where } \,  m = cos \varphi < 0
\end{equation*}but is a contradiction, since $ c := d(p,q)$ is the minimal distance.
\end{proof}

Our next step is to use the above Lemma for proving the following,

\begin{lemma}\label{Lem:folorthogonal}
Let $\m $ be a Hadamard surface with Gaussian curvature $\kappa \leq 0$ and $\alpha$
a complete geodesic in $\m$. Let $s$ be the arc length parameter along $\alpha$. Set
$S = \bigcup _{s\in \r}\beta _s$, where $\beta$ is the complete geodesic in $\m$
orthogonal to $\alpha (s)$ and $\beta _{s}(0)=\alpha (s)$ for all $s\in r$. Then,
$S$ is a foliation of $\m$.
\end{lemma}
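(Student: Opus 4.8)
The plan is to show that the parametrization $F\colon \r^2 \to \m$, $F(s,t)=\beta_s(t)$, is a diffeomorphism; the leaves $\{\beta_s\}_{s\in\r}$ will then partition $\m$ and furnish the foliation $S$. I would organize the argument into three steps: disjointness of distinct leaves, surjectivity (covering) of the $\beta_s$, and nondegeneracy of $dF$.

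For disjointness, I would argue by contradiction via the angle sum theorem. Suppose two distinct leaves $\beta_{s_1}$ and $\beta_{s_2}$, $s_1\neq s_2$, met at a point $p$. Since $\beta_{s_i}$ is orthogonal to $\alpha$ at $\alpha(s_i)$ and $s_1\neq s_2$ forces $\alpha(s_1)\neq\alpha(s_2)$, the point $p$ cannot lie on $\alpha$, so $p,\alpha(s_1),\alpha(s_2)$ are the vertices of a genuine geodesic triangle whose sides along $\beta_{s_1}$ and $\beta_{s_2}$ each make a right angle with the side along $\alpha$. The two base angles already sum to $\pi$, while the angle at $p$ is strictly positive because the two geodesics are distinct; hence the interior angles sum to more than $\pi$, contradicting the angle sum theorem for $\kappa\le 0$. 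Thus distinct leaves are disjoint.

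For covering, I would fix $p\in\m$. If $p\in\alpha$, then $p=\alpha(s)=\beta_s(0)$ for a unique $s$. Otherwise, since $\m$ is complete and the complete geodesic $\alpha$ is proper (so $d(p,\alpha(s))\to\infty$ as $|s|\to\infty$), the function $d(p,\cdot)$ attains a minimum on $\alpha$ at some $q=\alpha(s)$ with $d(p,q)=d(p,\alpha)$. By Lemma \ref{Lem:basicfoliation} the minimizing geodesic joining $p$ to $q$ meets $\alpha$ orthogonally at $q$, hence coincides with $\beta_s$, so $p\in\beta_s$. Together with disjointness this shows every point of $\m$ lies on exactly one $\beta_s$, i.e., $F$ is a bijection onto $\m$.

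It remains to verify that $F$ is a local diffeomorphism, and this is the technical heart of the argument. The map is smooth because $s\mapsto(\alpha(s),\beta_s'(0))$ is smooth and geodesics depend smoothly on their initial data. Along a fixed $\beta_s$ one has $\partial_t F=\beta_s'$, a unit field, while $J(t):=\partial_s F(s,t)$ is the Jacobi field with $J(0)=\alpha'(s)$ and $J'(0)=0$, the latter because the unit normal $\beta_s'(0)$ to $\alpha$ is parallel along the geodesic $\alpha$. Since $\langle J,\beta_s'\rangle''=0$ with vanishing initial data, $J\perp\beta_s'$ for all $t$, and then $g:=|J|^2$ satisfies
\[ g'' = 2|J'|^2 - 2\kappa\,|J|^2 \ge 0, \qquad g(0)=1,\quad g'(0)=0, \]
using $\kappa\le 0$. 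Convexity of $g$ forces $|J(t)|\ge 1>0$ for all $t$, so $J$ and $\beta_s'$ are everywhere linearly independent and $dF$ is nonsingular. A bijective local diffeomorphism is a diffeomorphism, whence $S=F(\r^2)=\m$ is foliated by the geodesics $\beta_s$. The main obstacle is precisely this nondegeneracy: one must exclude focal points of $\alpha$, which is exactly where $\kappa\le 0$ enters through the convexity of $g$; by contrast, the disjointness falls out cleanly from the angle sum theorem.
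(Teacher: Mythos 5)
Your proposal is correct, and its two core steps coincide exactly with the paper's proof: coverage of $\m$ comes from Lemma \ref{Lem:basicfoliation} (nearest-point projection onto $\alpha$ plus orthogonality of the minimizing geodesic), and disjointness of distinct leaves comes from the angle sum theorem applied to the triangle with two right angles at $\alpha(s_1)$, $\alpha(s_2)$ and a positive angle at the putative intersection point. The paper stops there, implicitly treating ``foliation'' as a partition of $\m$ into the geodesics $\beta_s$. What you add is a genuine third step the paper omits: showing that $F(s,t)=\beta_s(t)$ is a diffeomorphism by checking that the variation field $J$ is a Jacobi field with $J(0)=\alpha'(s)$, $J'(0)=0$, perpendicular to $\beta_s'$, whose norm squared is convex when $\kappa\le 0$, hence never vanishes. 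This upgrades the set-theoretic partition to a smooth foliation (in fact it produces global Fermi coordinates along $\alpha$), and it makes explicit where nonpositive curvature is used beyond the angle sum theorem, namely to rule out focal points of $\alpha$. The cost is length; the paper's version suffices for the way the lemma is used later (Definition \ref{Def:folitaionplanes} and the sweeping arguments in Theorem \ref{Theo:HS} only need that the vertical planes $P_\beta(t)$ are pairwise disjoint and fill the space), but your version justifies the continuity-of-the-sweep assertions more completely.
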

\begin{proof}
First of all, from Lemma \ref{Lem:basicfoliation}, we have that $\m \subseteq S$.
So, we only have to prove that $\beta _{s_0} \cap \beta_{s_1}=\emptyset$ for $s_0
\neq s_1$. Actually, this follows from the Gauss-Bonnet formula.

Assume there exist $p \in \beta _{s_0} \cap \beta_{s_1}$. First, note that $p \not
\in \alpha$ since $\beta (s_0) \neq \beta (s_1)$. Now, consider the geodesic
triangle of vertices $p$, $\beta _{s_0}(0)$ and $\beta _{s_1} (0)$. Since $\beta
_{s_0}$ and $\beta _{s_1}$ meet orthogonally to $\alpha$, then the angles $\varphi $
and $\phi$ subtended at $\beta_{s_0}(0)$ and $\beta _{s_1}(0)$ are equal to $\pi/2$
respectively. Moreover, the angle $\theta$ subtended at $p$ is positive. Thus
$\varphi + \phi + \theta > \pi$, which contradicts the Angle Sum Theorem.
\end{proof}

Next, we establish another result about foliations on Hadamard surfaces.

\begin{lemma}\label{Lem:folinfinity}
Let $\m $ be a Hadamard surface with Gaussian curvature $\kappa \leq c < 0$. Let $S
= \bigcup_{y} \alpha(x_{0},y)$, where $x_{0}$ is a fixed point of
$\partial_{\infty}\m$ and $y \in \partial_{\infty}\m\setminus \{x_{0}\}$. Then $S$
is a foliation of $\m$.

%\begin{figure}[h]
%  \begin{center}
%    \epsfysize=6.5cm \epsffile{fig9.eps}
%  \end{center}
%  \caption{Figure 9}
%  \label{fig:Had1}
%\end{figure}
\end{lemma}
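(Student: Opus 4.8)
\smallskip
The plan is to verify the two defining properties of a foliation of $\m$: that the leaves $\alpha(x_0,y)$ cover $\m$, and that two leaves arising from distinct $y$ are disjoint. The first thing I would record is that every leaf is well defined: since $\kappa\le c<0$, the hypothesis on the curvature recalled above (bounded above by a negative constant) guarantees a unique geodesic joining the two points at infinity $x_0$ and $y$, so $\alpha(x_0,y)$ makes sense for each $y\in\partial_{\infty}\m\setminus\{x_0\}$.

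For the covering property, fix $p\in\m$. By the uniqueness property stated above there is a unique geodesic ray $\gamma_{p,x_0}$ with $\gamma_{p,x_0}(0)=p$ lying in the class of $x_0$; I would extend it to a complete geodesic $\sigma$, so that $\sigma(+\infty)=x_0$ and $\sigma(-\infty)=y$ for some $y\in\m(\infty)$. Next I would check that $y\neq x_0$: since $\sigma$ is minimizing, $d(\sigma(t),\sigma(-t))=2t\to+\infty$ as $t\to+\infty$, so the forward and backward rays of $\sigma$ cannot be asymptotic and hence determine distinct points at infinity. As a point set $\sigma=\alpha(x_0,y)$, and $p=\sigma(0)\in\alpha(x_0,y)$, whence $\m\subseteq S$.

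The heart of the argument is disjointness, and here the key tool is once more the uniqueness of the ray from a point to a point at infinity. Suppose $p\in\alpha(x_0,y_1)\cap\alpha(x_0,y_2)$. Each geodesic $\alpha(x_0,y_i)$ has exactly one of its two rays issuing from $p$ asymptotic to $x_0$, namely the ray heading to its $x_0$-end. Both of these are geodesic rays from $p$ in the class of $x_0$, so by uniqueness they coincide; in particular they share the same unit tangent vector at $p$. Since a complete geodesic is determined by a point and a tangent direction, $\alpha(x_0,y_1)$ and $\alpha(x_0,y_2)$ coincide as complete geodesics, forcing $y_1=y_2$. (Alternatively, two such geodesics meeting at $p$ would bound an ideal bigon with one finite vertex of positive angle and one ideal vertex, contradicting the angle estimates coming from Gauss--Bonnet, in the spirit of Lemma~\ref{Lem:folorthogonal}.)

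Finally I would observe that the leaves depend continuously on $y$, so that covering together with disjointness yields a genuine foliation. The step I expect to require the most care is disjointness: one must select, among the two rays of each leaf issuing from $p$, precisely the one asymptotic to $x_0$, and then apply the uniqueness of such a ray. The strict sign of the curvature, $\kappa\le c<0$, enters earlier, in guaranteeing that each leaf $\alpha(x_0,y)$ is an unambiguously defined geodesic joining two points at infinity (a property which fails, for instance, in the flat plane, where distinct parallel lines share their ideal endpoints).
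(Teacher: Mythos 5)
Your proposal is correct and follows essentially the same route as the paper: the covering property is immediate (the paper simply declares it clear), and disjointness rests on the uniqueness of the geodesic joining a point of $\m$ to a point of $\m(\infty)$, which is exactly the contradiction the paper invokes when a common point $r$ would yield two distinct geodesic arcs from $r$ to $x_0$. Your version merely spells out the details (extending the ray to a complete geodesic, checking $y\neq x_0$, passing from coinciding rays to coinciding complete geodesics) that the paper leaves implicit.
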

\begin{proof}
It is clear that $\m \subseteq S$, so we only need to prove that $\alpha(x_{0},y_1)
\cap \alpha(x_{0}, y_2) = \emptyset$ for $y_1 \neq y_2$.

Assume that $r \in \alpha(x_{0}, y_1) \cap \alpha(x,y_2)$. In this case, we have two
distinct geodesics arcs, $\alpha(x_{0}, y_1)$ and $\alpha(x_{0}, y_2)$ joining $r$
to $x_{0}$, a contradiction.
\end{proof}

\subsection{On Killing submersions}

Now, we establish the definitions and properties of a Riemannian $3-$manifold which
fiber over a Riemannian surface and whose fibers are the trajectories of a unit
Killing vector field.

Let $\man$ be a $3-$dimensional Riemannian manifold so that it is a Riemannian
submersion $\pi : \man \to \m$ over a surface $(\m , g)$ with Gauss curvature
$\kappa$, and the {\it fibers}, i.e. the inverse image of a point at $\m $ by $\pi$,
are the trajectories of a unit Killing vector field $\xi $, and hence geodesics.
Denote by $\meta{}{}$, $\camb$, $\ext $, $\bar R$ and $[,]$ the metric, Levi-Civita
connection, exterior product, Riemann curvature tensor and Lie bracket in $\man$,
respectively. Moreover, associated to $\xi$, we consider the operator $J: \campo
(\man) \to \campo (\man)$ given by
\begin{equation*}
J X : = X \ext \xi , \, \, \, X \in \campo (\man).
\end{equation*}

Given $X \in \campo (\man)$, $X$ is {\it vertical} if it is always tangent to
fibers, and {\it horizontal} if always orthogonal to fibers. Moreover, if $X \in
\campo (\man) $, we denote by $X^v$ and $X^h$ the projections onto the subspaces of
vertical and horizontal vectors respectively.

Now, we remind the definition of two tensors that appear naturally (see \cite{O} for
details) when we have a submersion. Given $X, Y \in \campo (\man) $ we define
\begin{equation}\label{TensorT}
\mathcal{T}_X Y = \left( \camb _{X^v} Y^v\right)^h  + \left( \camb _{X^v}
Y^h\right)^v ,
\end{equation}and
\begin{equation}\label{TensorA}
\mathcal{A}_X Y = \left( \camb _{X^h} Y^h\right)^v  + \left( \camb _{X^h}
Y^v\right)^h .
\end{equation}

We will not recall the properties of these tensors, we refer the reader to \cite{O}
for the properties we will make use.

First of all, we will see how we can associate a function to the ambient manifold
$\man $.

\begin{proposition}\label{Prop:tau}
Let $\man$ be as above. There exists a function $\tau : \man \to \r $ so that
\begin{equation}
\camb _X \xi = \tau \, X \ext \xi , \,
\end{equation}
\end{proposition}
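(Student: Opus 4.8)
The plan is to show that the pointwise endomorphism $S\colon T_p\man\to T_p\man$ defined by $S(X)=\camb_X\xi$ coincides, at each point, with a scalar multiple of the operator $J$, and that this scalar varies smoothly; that scalar will be the desired function $\tau$.

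First I would extract the algebraic consequences of the three hypotheses on $\xi$. Since $\xi$ is a Killing field, the Killing equation $\meta{\camb_X\xi}{Y}+\meta{\camb_Y\xi}{X}=0$ holds for all $X,Y\in\campo(\man)$, which says precisely that $S$ is skew-symmetric with respect to $\meta{}{}$ at every point. Since $\abs{\xi}^2=1$ is constant, differentiating in the direction $X$ gives $\meta{\camb_X\xi}{\xi}=0$, so the image of $S$ is always horizontal. Finally, the fibers being geodesics means $\camb_\xi\xi=0$, i.e. $S(\xi)=0$.

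Next I would carry out a dimension count at a fixed point $p$. Choose a positively oriented orthonormal frame $E_1,E_2,\xi$ of $T_p\man$ with $E_1,E_2$ horizontal. A skew-symmetric endomorphism with $S(\xi)=0$ and horizontal image is determined by a single scalar $b$: writing $S(E_1)=aE_1+bE_2+c\xi$, horizontality of the image forces $c=0$, skewness forces $a=\meta{SE_1}{E_1}=0$, and skewness together with $S(\xi)=0$ forces $S(E_2)=-bE_1$; thus the space of such operators is one-dimensional. The operator $J$, given by $JX=X\ext\xi$, enjoys the same three properties: it is metric-skew, since $\meta{JX}{Y}=-\meta{JY}{X}$ by antisymmetry of the triple product; it kills $\xi$, since $\xi\ext\xi=0$; and it has horizontal image, since $\meta{JX}{\xi}=0$. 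Moreover $J$ is nonzero, because $JE_1=E_1\ext\xi$ is a unit horizontal vector. Hence $S$ and $J$ span the same line in the space of endomorphisms of $T_p\man$, and there is a unique scalar $\tau(p)$ with $S=\tau(p)\,J$ at $p$, that is $\camb_X\xi=\tau(p)\,X\ext\xi$ for all $X\in T_p\man$.

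It then remains to verify that $p\mapsto\tau(p)$ is smooth, which is immediate: for any local horizontal orthonormal frame one has $\tau=\meta{\camb_{E_1}\xi}{JE_1}$ (using $\abs{JE_1}^2=1$), and the right-hand side is a smooth function of $p$. The step requiring the most care is the skew-symmetry reduction: one must confirm that the Killing equation is being invoked in the form giving skew-symmetry of the \emph{full} endomorphism $X\mapsto\camb_X\xi$, and that it is horizontality of the image $S(X)\perp\xi$ that collapses the a priori three-parameter family of skew operators on a $3$-dimensional space down to the single parameter realized by $J$. Once this is in place, existence, uniqueness, and smoothness of $\tau$ all follow.
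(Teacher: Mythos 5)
Your proof is correct, and it reorganizes the paper's argument rather than reproducing it. The paper works vector-by-vector: for a horizontal $X$ it notes that $\camb _X \xi$ is orthogonal to both $X$ and $\xi$ (the diagonal Killing identity plus $\meta{\xi}{\xi}=1$), hence equals $\tau _X \, X \ext \xi$ for some scalar $\tau _X$, and then uses the off-diagonal Killing identity $\meta{\camb _X \xi}{Y} + \meta{\camb _Y \xi}{X} = 0$ on an oriented orthonormal horizontal pair $\set{X,Y}$ to force $\tau _X = \tau _Y$. You instead treat $S(X) = \camb _X \xi$ as a single endomorphism, invoke the Killing equation in its full skew-symmetric form, add $S(\xi) = \camb _\xi \xi = 0$ from the geodesic fibers, and finish with a dimension count: skew operators annihilating $\xi$ form a line, and $J$ is a nonzero element of it. The ingredients are identical (full skewness is exactly the diagonal plus off-diagonal identities the paper uses), but your packaging buys two things the paper leaves implicit. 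First, the proposition asserts $\camb _X \xi = \tau \, X \ext \xi$ for \emph{all} $X$, while the paper's proof only verifies it for horizontal $X$; extending to general $X$ requires precisely the tensoriality of $S$ and the identity $\camb _\xi \xi = 0$, both of which you build in from the start. Second, you address smoothness of $\tau$ via the formula $\tau = \meta{\camb _{E_1} \xi}{J E_1}$ in a local horizontal frame, a point the paper passes over in silence even though smoothness is needed later (e.g.\ where $\xi (\tau)$ appears in the sectional curvature computation).
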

\begin{proof}
Set $X\in \campo (\man)$. Since $\xi $ is a unit Killing field, we have
$$  \meta{\camb _{X } \xi }{X } = 0 ,$$and
$$ \meta{\camb _{X} \xi}{\xi } = \frac{1}{2}X\meta{\xi}{\xi} = 0.  $$

Thus, for any horizontal $X \in \campo (\man)$, there exist $\tau _X : \man \to \r $
so that
$$ \camb _{X}\xi = \tau _X \, X \ext \xi .$$

Hence, let $\set{X, Y} \in \campo (\man )$ be an orthonormal basis of horizontal
vector fields so that ${\rm det}(X , Y , \xi) =1$, we have
\begin{eqnarray}
\camb _{X}\xi &=&\tau _X \, X \ext \xi \label{tauX}\\[3mm]
\camb _Y \xi &=& \tau _Y \, Y \ext \xi \label{tauY} .
\end{eqnarray}

Hence, it is enough to prove that $\tau _ X = \tau _Y$. Take the scalar product of
\eqref{tauX} and $Y$; and the scalar product of \eqref{tauY} and $X$, then
\begin{equation*}
\begin{split}
\meta{\camb _X \xi }{Y} &= \tau _X \, \meta{X \ext \xi}{Y} = \tau _X \, {\rm
det}(X,\xi, Y) \\
 &= - \tau _X \, {\rm det}(X,Y,\xi) = - \tau _X,\\
\meta{\camb _Y \xi }{X} &= \tau _Y \, \meta{Y \ext \xi}{X} = \tau _Y \, {\rm
det}(Y,\xi, X) \\
 &=\tau _Y \, {\rm det}(X,Y,\xi) = \tau _Y .
\end{split}
\end{equation*}

Since $\xi$ is a Killing vector field, the above equation yields
\begin{equation*}
0 = \meta{\camb _X \xi}{Y} + \meta{\camb _Y \xi }{X} = -\tau _X + \tau _Y ,
\end{equation*}thus $\tau _X = \tau _Y$.
\end{proof}

Proposition \ref{Prop:tau} makes natural to introduce the following definition:

\begin{definition}
A Riemannian submersion over a surface $\m $ whose fibers are the trajectories of a
unit Killing vector field $\xi$ will be called {\it Killing submersion} and denoted
by $\hm $, where $\kappa $ is the Gauss curvature of $\m $ and $\tau $ is given in
Proposition \ref{Prop:tau}.
\end{definition}

Our first task is to compute the sectional curvature $\bar K(X,Y)$ of any plane
generated by $X,Y \in \campo (\hm)$.

\begin{lemma}\label{Lem:SectK}
Let $\hm $ be a Riemannian submersion with unit Killing vector field $\xi $. Let
$\set{X, Y} \in T \hm $ be an orthonormal basis of horizontal vector fields so that
$\set{X,Y,\xi}$ is positively oriented. Then
\begin{eqnarray}
\bar K( X,Y ) &=& \kappa - 3\tau ^2 , \label{SectHH}\\
\bar K( X,\xi ) &=& \tau ^2 . \label{SectHV}
\end{eqnarray}
\end{lemma}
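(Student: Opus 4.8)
The plan is to compute both sectional curvatures directly from the definition $\bar K(X,Y) = \meta{\bar R(X,Y)Y}{X}$ using the O'Neill formalism, exploiting the fact that the fibers of $\hm$ are geodesics tangent to the unit Killing field $\xi$. The key structural simplification is that, since $\xi$ generates the fibers and the fibers are totally geodesic, the $\mathcal{T}$ tensor defined in \eqref{TensorT} vanishes identically; this is exactly the condition that the fibers are geodesics, so only the $\mathcal{A}$ tensor from \eqref{TensorA} survives. I would record this vanishing first, since it is what makes the O'Neill curvature identities tractable.

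For the mixed curvature \eqref{SectHV}, I would use that $\camb_X \xi = \tau\, X\ext\xi$ from Proposition \ref{Prop:tau}. The first step is to express $\bar K(X,\xi)$ via $\mathcal{A}$: for a horizontal $X$ and the vertical $\xi$, the O'Neill formula reduces (after $\mathcal{T}=0$) to an expression of the form $\meta{\bar R(X,\xi)\xi}{X} = \norm{\mathcal{A}_X \xi}^2$, and I would compute $\mathcal{A}_X\xi = (\camb_X\xi)^h = \tau\, X\ext\xi$, whose norm squared is $\tau^2$ because $X\ext\xi$ is a unit horizontal vector (as $X$ and $\xi$ are orthonormal). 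This yields $\bar K(X,\xi)=\tau^2$ and I expect it to be the easier of the two identities.

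For the horizontal curvature \eqref{SectHH}, the relevant O'Neill identity relates $\bar K(X,Y)$ to the curvature $\kappa$ of the base surface $\m$ through its horizontal lift, corrected by a term $-3\norm{\mathcal{A}_X Y}^2$. The plan is to identify the sectional curvature of the base as $\kappa$ (the submersion is Riemannian, so $\pi$ is a local isometry on horizontal planes up to the O'Neill correction), then compute $\mathcal{A}_X Y = (\camb_X Y)^v = \meta{\camb_X Y}{\xi}\,\xi$. The main obstacle will be evaluating this vertical component cleanly: I would rewrite $\meta{\camb_X Y}{\xi} = -\meta{Y}{\camb_X \xi} = -\tau\,\meta{Y}{X\ext\xi} = -\tau\,{\rm det}(X,Y,\xi) = -\tau$, using that $\xi$ is Killing and Proposition \ref{Prop:tau}, exactly as in the determinant computations already performed in that proof. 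Hence $\norm{\mathcal{A}_X Y}^2 = \tau^2$ and the correction term contributes $-3\tau^2$, giving $\bar K(X,Y)=\kappa-3\tau^2$. The delicate points to verify carefully are the precise form of the O'Neill coefficients (the $+3$ and $-3$ factors) and the sign and normalization conventions for $X\ext\xi$, since the orientation ${\rm det}(X,Y,\xi)=1$ fixes these; I would double-check these against the reference \cite{O} rather than rederive the general O'Neill equations from scratch.
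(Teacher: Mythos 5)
Your proposal is correct and follows essentially the same route as the paper: both rest on O'Neill's curvature equations together with Proposition \ref{Prop:tau}, the only structural difference being that you kill the $\mathcal{T}$-terms in the mixed curvature at once by noting that the one-dimensional geodesic fibers force $\mathcal{T}\equiv 0$, whereas the paper verifies $\mathcal{T}_\xi X=0$ and $\meta{\left(\camb_X\mathcal{T}\right)_\xi\xi}{X}=0$ by hand (and computes $\mathcal{A}_XY$ via $\tfrac12[X,Y]^v$ rather than directly from the definition). One harmless slip: with the paper's convention $\meta{a\ext b}{c}={\rm det}(a,b,c)$ one has $\meta{Y}{X\ext\xi}={\rm det}(X,\xi,Y)=-{\rm det}(X,Y,\xi)=-1$, so $\meta{\camb_XY}{\xi}=+\tau$ (matching the paper's $\mathcal{A}_XY=\tau\xi$), not $-\tau$; this does not affect the conclusion since only $\norm{\mathcal{A}_XY}^2=\tau^2$ enters the formula.
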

\begin{proof}
From \cite[Corollary 1]{O}, we have
$$ \bar K(X, Y) = \kappa - 3 \norm{\mathcal{A}_X Y}^2 ,$$and using \cite[Lemma 2]{O} we know
that $\mathcal{A}_X Y = \frac{1}{2}[X,Y]^v$. Thus,
\begin{equation*}
\begin{split}
\meta{\mathcal{A}_X Y}{\xi} &=
\frac{1}{2}\meta{[X,Y]^v}{\xi}=\frac{1}{2}\meta{[X,Y]}{\xi} =
\frac{1}{2}\meta{\camb _X Y}{\xi} - \frac{1}{2}\meta{\camb _Y X}{\xi} = \\
&= -\frac{1}{2}\meta{ Y}{\camb _X \xi} + \frac{1}{2}\meta{ X}{\camb _Y \xi} =
-\frac{1}{2}\meta{ Y}{-\tau Y} + \frac{1}{2}\meta{ X}{\tau X} = \\
&= \tau
\end{split}
\end{equation*}where we have used that $\set{X,Y,\xi}$ is positively oriented, i.e.,
$ \camb _X \xi = - \tau Y$ and $\camb _Y \xi = \tau X$. So,
\begin{equation*}
\mathcal{A} _X Y = \tau \xi ,
\end{equation*}since $\mathcal{A}_X Y$ is vertical. Hence, we obtain \eqref{SectHH}.

Again, \cite[Corollary 1]{O} gives
\begin{equation*}
\bar K(X, \xi ) = \meta{\left(\camb _X \mathcal{T}\right)_\xi \xi}{X} +
\norm{\mathcal{A}_X \xi}^2 - \norm{\mathcal{T}_\xi X} ^2 .
\end{equation*}

On one hand, $\mathcal{A}_X \xi = \left( \camb _X \xi\right)^h$, i.e., it is a
horizontal vector field. Then,
\begin{equation*}
\begin{split}
\meta{\mathcal{A}_X \xi}{X} &= \meta{\camb _X \xi}{X} = -\tau \meta{Y}{X} =0 ,\\
\meta{\mathcal{A}_X \xi}{Y} &= \meta{\camb _X \xi}{Y} = -\tau \meta{Y}{Y} = -\tau ,
\end{split}
\end{equation*}that is, $\mathcal{A}_X \xi = - \tau Y$, thus $\norm{\mathcal{A}_X \xi }^2= \tau ^2$.

On the other hand, $\mathcal{T}_\xi X = \left( \camb _\xi X\right)^v$ is vertical,
so
\begin{equation*}
\meta{\mathcal{T}_\xi X}{\xi } = \meta{\camb _\xi X}{\xi} = - \meta{X}{\camb _\xi
\xi} = 0,
\end{equation*}which implies $\mathcal{T}_\xi X =0 $, hence $\norm{\mathcal{T}_\xi X}^2 =0$.

Finally, since $\camb _\xi \xi = 0 $ and $\mathcal{T}_Y \xi = 0$,
\begin{equation*}
\begin{split}
\left(\camb _X \mathcal{T}\right)_\xi \xi &= \camb _X \mathcal{T}_\xi \xi -
\mathcal{T}_{\camb _X \xi} \xi - \mathcal{T}_\xi \camb _X \xi \\
 &= \camb _X \left( \camb _\xi \xi\right)^h + \tau \mathcal{T}_Y \xi + \mathcal{T}_\xi \left( \tau
 Y\right)\\
 &= \left( \camb _\xi (\tau Y)\right)^v = \left( \xi (\tau) Y + \tau \camb _\xi
 Y\right)^v\\
 &= \tau \left( \camb _\xi Y \right)^v ,
\end{split}
\end{equation*}we obtain
\begin{equation*}
\meta{\left(\camb _X \mathcal{T}\right)_\xi \xi}{X}=0 .
\end{equation*}

Summarizing, $\norm{\mathcal{A}_X \xi }^2= \tau ^2$, $\norm{\mathcal{T}_\xi X}^2 =0$
and $\meta{\left(\camb _X \mathcal{T}\right)_\xi \xi}{X}=0$, thus \eqref{SectHV}
follows from the expression of $\bar K(X, \xi )$.
\end{proof}

\subsection{On surfaces in Killing submersions}

Let $\Sigma \subset \hm$ be an oriented immersed connected surface. We endow
$\Sigma$ with the induced metric (\emph{First Fundamental Form}),
$\meta{}{}_{|\Sigma}$, in $\hm$, which we still denote by $\meta{}{}$. Denote by
$\nabla $ and $R$ the Levi-Civita connection and the Riemann curvature tensor of
$\Sigma$ respectively, and $S$ the shape operator, i.e., $S X = - \nabla _X N$ for
all $X \in \campo (\Sigma)$ where $N$ is the unit normal vector field along the
surface. Then $II(X,Y) = \meta{SX }{Y}$ is the \emph{Second Fundamental Form} of
$\Sigma$. Moreover, we denote by $J$ the (oriented) rotation of angle $\pi /2$ on
$T\Sigma$.

Set $\nu = \meta{N}{\xi}$ and $T = \xi - \nu N$, i.e., $\nu $ is the normal
component of the vertical field $\xi$, called the \emph{angle function}, and $T$ is
the tangent component of the vertical field.

We now study some particular surfaces in $\hm$. To do so, we will require some
definitions.

\begin{definition}\label{Def:cylinder}
We say that $\Sigma \subset \hm$ is a vertical cylinder over $\alpha$ if \,$\Sigma
:= \pi ^{-1} (\alpha)$, where $\alpha $ is a curve on $(\m  ,g)$. If $\alpha $ is a
geodesic, $\Sigma := \pi ^{-1}(\alpha)$ is called a vertical plane.
\end{definition}

Let us start by studying the geometry of a vertical cylinder:

\begin{proposition}\label{Prop:Cylinder}
Let $\Sigma \subset \hm$ be a vertical cylinder over $\alpha$. Then, the mean,
Gaussian and extrinsic curvature are respectively
\begin{equation*}
H = k_g /2 , \quad K = 0 , \quad  K_e = - \tau ^2 ,
\end{equation*}where $k_g$ is the geodesic curvature of $\alpha $ with respect to
$g$. Moreover, these cylinders are characterized by $\nu \equiv 0$. In particular, a
complete vertical cylinder is isometric to $\r ^2$. Also, when $\tau \equiv 0$, a
vertical plane in $\mr$ is totally geodesic.
\end{proposition}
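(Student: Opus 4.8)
The plan is to work in the adapted orthonormal frame of $\Sigma$ given by $e_1=X$, the horizontal lift of the unit tangent to $\alpha$, and $e_2=\xi$, the vertical field. Since the whole fiber direction is tangent to $\Sigma=\pi^{-1}(\alpha)$, the unit normal $N$ is necessarily horizontal, which already delivers the characterization: on a cylinder $\nu=\meta{N}{\xi}=0$, and conversely if $\nu\equiv 0$ then $\xi=T$ is everywhere tangent, so $\Sigma$ is a union of fibers and hence $\Sigma=\pi^{-1}(\pi(\Sigma))$ with $\pi(\Sigma)$ a curve of $\m$. I would fix the orientation so that $N$ projects under $\pi$ to the unit normal $n$ of $\alpha$ in $(\m,g)$ and take $\set{X,N,\xi}$ positively oriented, so that Proposition \ref{Prop:tau} reads $\camb_X\xi=\tau\,X\ext\xi=-\tau N$.

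Next I would compute the shape operator $S$ in the basis $\set{X,\xi}$. Because the fibers are geodesics, $\camb_\xi\xi=0$, so $II(\xi,\xi)=\meta{\camb_\xi\xi}{N}=0$; the relation above gives $II(X,\xi)=\meta{\camb_X\xi}{N}=-\tau$; and since $\pi$ is a Riemannian submersion the horizontal part of $\camb_X X$ is the horizontal lift of $\nabla^{\m}_{\alpha'}\alpha'=k_g\,n$, whence $II(X,X)=\meta{\camb_X X}{N}=k_g$. Thus $S$ has matrix $\left(\begin{smallmatrix}k_g & -\tau\\ -\tau & 0\end{smallmatrix}\right)$, giving $H=\tfrac12\,\mathrm{tr}\,S=k_g/2$ and $K_e=\det S=-\tau^2$. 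For the intrinsic curvature I would apply the Gauss equation together with Lemma \ref{Lem:SectK}, obtaining $K=\bar K(X,\xi)+K_e=\tau^2-\tau^2=0$.

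The remaining point, which I expect to be the main obstacle since the curvature identities are immediate from the structure equations, is the global claim that a complete vertical cylinder is isometric to $\r^2$; flatness alone is insufficient, so I would exhibit an explicit product structure. The flow $\phi_t$ of $\xi$ preserves each fiber, hence preserves $\Sigma$, and as $\xi$ is Killing in $\hm$ and tangent to $\Sigma$ it restricts to a unit Killing field on $(\Sigma,\meta{}{})$ whose orbits are geodesics of $\Sigma$ (indeed $\nabla_\xi\xi=0$ because $\camb_\xi\xi=0$). Parametrizing $\Sigma$ by $(s,t)\mapsto\phi_t(\tilde\alpha(s))$, where $\tilde\alpha$ is the horizontal lift of the arc-length parametrization of $\alpha$, I would note that $\partial_t=\xi$ and that along $t=0$ one has $\partial_s=X$ with $\meta{\partial_s}{\partial_s}=1$ and $\meta{\partial_s}{\partial_t}=\meta{X}{\xi}=0$. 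Invariance of the metric under $\phi_t$ forces every coefficient to be independent of $t$, so $\meta{}{}=ds^2+dt^2$ globally; as $\alpha$ is a complete (properly embedded) line in the simply connected surface $\m$ and the fibers are complete geodesics, $(s,t)$ range over $\r\times\r$ and $\Sigma$ is isometric to $\r^2$. Finally, when $\tau\equiv 0$ a vertical plane sits over a geodesic, so $k_g=0$ and the matrix of $S$ vanishes identically, i.e. the plane is totally geodesic.
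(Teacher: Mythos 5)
Your proposal is correct, and its core is the same as the paper's: both work in the orthonormal tangent frame $\set{X,\xi}$ (horizontal lift of $\alpha'$ plus the Killing field), compute $II(\xi,\xi)=0$ from $\camb_\xi\xi=0$, $II(X,\xi)=-\tau$ from Proposition \ref{Prop:tau}, and $II(X,X)=k_g$ from the submersion property, then read off $H=k_g/2$ and $K_e=-\tau^2$ from trace and determinant. Where you genuinely differ is in the pieces the paper glosses over. For the flatness, the paper merely asserts ``it is clear that $K\equiv 0$'' (implicitly relying on the intrinsic product structure), whereas you derive $K=\bar K(X,\xi)+K_e=\tau^2-\tau^2=0$ from the Gauss equation together with Lemma \ref{Lem:SectK}; this is a clean alternative that makes flatness an output of the same curvature computation. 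You also supply arguments for two claims the paper's proof never addresses at all: the converse half of the characterization (that $\nu\equiv 0$ forces $\Sigma$ to be a union of fibers, hence a vertical cylinder) and the isometry of a complete cylinder with $\r^2$, via the Killing-flow parametrization $(s,t)\mapsto\phi_t(\tilde\alpha(s))$ and invariance of the metric under $\phi_t$. One caveat worth keeping: the $\r^2$ statement needs the fibers to have infinite length and $\alpha$ to be a complete embedded line (otherwise, e.g.\ over a closed curve with circle fibers one gets a flat torus); your parenthetical hypotheses make this explicit, which is an improvement on the paper, where these assumptions are only implicit in the Hadamard--Killing setting in which the proposition is later used.
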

\begin{proof}
Let us parametrize $\alpha \subset \m  $ by arc-length. Let $\vec t$ and $\vec n$ be
the tangent and normal vector fields along $\alpha$. Denote by $\vec T$ and $N$ the
unique horizontal lifts  to $\hm$. Note that $\set{\vec T, \xi} \in \campo(\Sigma)$
is a orthonormal basis and $N$ is the unit normal vector field along $\Sigma$, in
particular $\nu \equiv 0$. Moreover, it is clear that $\Sigma$ is flat, i.e.,
$K\equiv 0$. We choose $N$ so that $\set{\vec T,N,\xi}$ is positively oriented.

The second fundamental form applied to a pair of vector fields $X,Y \in \campo
(\Sigma)$ is given by $II(X,Y) = \meta{\camb _X Y}{N}$. We want to compute the
second fundamental form of $\Sigma$ in the basis $\set{\vec T,\xi}$. Then,
\begin{equation*}
\begin{split}
\camb _\xi \xi &= 0 , \, \, \mbox{ since it is a unit Killing vector field}. \\[4mm]
\meta{\camb _{\vec T} \xi }{N} &= \tau \meta{\vec{T} \ext \xi}{ N } = \tau \, {\rm
det}(\vec T,\xi ,N)=- \tau. \\[4mm]
\meta{\camb_{\vec T} \vec T}{N}&= g\left(\nabla ^{\tm} _{\vec t} \vec t ,\vec n
\right)= k_g .
\end{split}
\end{equation*}

Thus, if we set $X : = x^1 \xi + x ^2 \vec T$ and $Y:= y^1 \xi + y^2 \vec T$, we
have
$$ II(X,Y) = (x^1 , x^2) \begin{pmatrix}
0 & - \tau \\
-\tau & k_g
\end{pmatrix}\begin{pmatrix}
y^1 \\ y^2
\end{pmatrix}. $$

So, since the mean and extrinsic curvatures are the trace and determinant
respectively of the second form, we obtain the result.
\end{proof}

Henceforth, most of the results are proven under the assumption that $\hm$ fibers
over a strict Hadamard surface $\m$, that is, the Gaussian curvature $\kappa$ of
$\m$ is bounded above by a negative constant. Therefore, we define:

\begin{definition}
We say  that $\hm$ is a strict Hadamard-Killing submersion if it fibers over a
strict Hadamard surface $\m $, i.e., $\m$ has Gaussian curvature $\kappa $ bounded
above by a negative constant.
\end{definition}

We will introduce a definition according to that given for complete geodesics in a
Hadamard surface since the notions of interior and exterior domains of a horizontal
oriented geodesic extend naturally to vertical planes.

\begin{definition}\label{Def:Interiorplane}
Let $\hm$ be a Hadamard-Killing submersion. For a complete oriented geodesic
$\alpha$ in $\m$ we call, respectively, interior and exterior of the vertical plane
$P = \pi^{-1}(\alpha)$ the sets
\begin{equation}
int_{\mathcal{M}(\kappa,\tau)}(P)=\pi^{-1}(int_{\mathbb{M}^{2}}(\alpha)),
\hspace{1cm}
ext_{\mathcal{M}(\kappa,\tau)}(P)=\pi^{-1}(ext_{\mathbb{M}^{2}}(\alpha)) \nonumber
\end{equation}
\end{definition}

Moreover, we will often use foliations by vertical planes of $\hm$. We now make this
precise.

\begin{definition}\label{Def:folitaionplanes}
Let $\hm$ be a Hadamard-Killing submersion. Let $P$ be a vertical plane in $\hm$,
and let $\beta(t)$ be an oriented horizontal geodesic in $\m $, with $t$ arc length
along $\beta$, $\beta(0)=p_{0} \in P$, $\beta'(0)$ orthogonal to $P$ at $p_{0}$ and
$\beta(t) \in ext_{\mathcal{M}(\kappa,\tau)}(P)$ for $t> 0$. We define the oriented
foliation of vertical planes along $\beta$, denoted by $P_{\beta}(t)$, to be the
vertical planes orthogonal to $\beta(t)$ with $P=P_{\beta}(0)$.
\end{definition}

\begin{remark}
The Definition \ref{Def:folitaionplanes} is actually a foliation by Lemma
\ref{Lem:folorthogonal}.
\end{remark}

To finish, we will give the definition of a particular type of curve in a vertical
plane. To do so, we recall a few concepts about Killing graphs in a Killing
submersion (see \cite{RST}).

Under the assumption that the fibers are complete geodesics of infinite length, it
can be shown (see \cite{St}) that such a fibration is topologically trivial.
Moreover, there always exists a global section
$$ s : \m \to \hm ,$$so, considering the flow $\phi _t$ of $\xi$, a trivialization
of the fibration is given by the diffeomorphism
$$ \begin{matrix}
\mr & \to & \hm \\
(p,t)& \rightarrowtail & \phi _t (s(p))
\end{matrix} $$

\begin{definition}\label{Def:Killinggraph}
Let $\pi : \hm \to \m $ be a Killing submersion. Let $\Omega \subset \m $ be a
domain. A Killing graph over $\Omega$ is a surface $\Sigma \subset \hm $ which is
the image of a section $s : \overline \Omega \to \hm $, with $s \in C^2(\Omega) \cap
C^0 (\overline \Omega)$. We may also consider graphs, $\Sigma \subset \hm$, without
boundary.
\end{definition}

Now, we can establish the announced definition.

\begin{definition}\label{Def:veticalcurve}
Let $P$ be a vertical plane in $\hm$ and $\alpha$ a complete embedded convex curve
in $P$. We say that $\alpha$ is an untilted curve in $P$ if there exists a point
$p\in \alpha$ so that $\phi _t (p)$ is contained in the convex body bounded by
$\alpha $ in $P$ for all $t>0$ (or $t<0$). Otherwise, we say that $\alpha $ is
tilted.
\end{definition}

\section{Hadamard-Stoker-Currier type theorems}\label{HSTheorem}

We devote this section to the proof of a Hadamard-Stoker-Currier type theorem in a
strict Hadamard-Killing submersion. First, note that if $\Sigma \subset \hm $ is an
immersed surface with positive extrinsic curvature, then we can choose a globally
defined unit normal vector field $N$ so that the principal curvatures, i.e., the
eigenvalues of the shape operator, are positive. We denote them by $k_i$ for
$i=1,2$.

We start with the following elementary result.

\begin{proposition}\label{Pro:convex}
Let $\Sigma \subset \hm $ be an immersed surface whose principal curvatures satisfy
$k_{i}(p)> |\tau (p)|$ for all $p\in \Sigma$. Let $P$ be a vertical plane. If
$\Sigma$ and $P$ intersect transversally then each connected component $C$ of
$\Sigma \cap P$ is a strictly convex curve in $P$.
\end{proposition}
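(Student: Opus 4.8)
The plan is to compute the geodesic curvature of $C$ inside the flat plane $P$ and bound it away from zero, using the hypothesis $k_i>|\tau|$ together with the explicit second fundamental form of a vertical plane from Proposition \ref{Prop:Cylinder}. Parametrize a connected component $C$ by arc length and let $e=c'$ be its unit tangent; since $C=\Sigma\cap P$, the vector $e$ is tangent to both $\Sigma$ and $P$. The normal plane to $C$ in $\hm$ is two–dimensional and carries two orthonormal bases: $\set{Je,N}$ coming from $\Sigma$ (with $N$ the unit normal of $\Sigma$ and $J$ the rotation by $\pi/2$ on $T\Sigma$), and $\set{\eta,N_P}$ coming from $P$ (with $\eta$ the unit tangent to $P$ orthogonal to $e$ and $N_P$ the unit normal of $P$). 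These bases differ by a rotation through the angle $\theta$ between $\Sigma$ and $P$ along $C$; transversality means exactly $\sin\theta\neq 0$.

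Next I would decompose $\camb_e e$, which is orthogonal to $e$ since $C$ is parametrized by arc length. In the $\Sigma$–basis, $\camb_e e=\kappa_g^{\Sigma}\,Je+II_{\Sigma}(e,e)\,N$, where $\kappa_g^{\Sigma}$ is the geodesic curvature of $C$ in $\Sigma$ and $II_{\Sigma}(e,e)=\meta{Se}{e}$; in the $P$–basis, $\camb_e e=\kappa_g^{P}\,\eta+II_{P}(e,e)\,N_P$, with $\kappa_g^{P}$ the geodesic curvature of $C$ in $P$. Expressing one basis in terms of the other through the rotation by $\theta$ and comparing the $\eta$– and $N_P$–components eliminates the unknown $\kappa_g^{\Sigma}$ and yields the single relation
$$ \sin\theta\,\kappa_g^{P}+\cos\theta\,II_{P}(e,e)=II_{\Sigma}(e,e). $$

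I would then insert the two quantitative bounds. On one side, since $S$ is self–adjoint with eigenvalues $k_1(p),k_2(p)$, a unit $e$ gives $II_{\Sigma}(e,e)=\meta{Se}{e}\geq\min\set{k_1(p),k_2(p)}>|\tau(p)|$ by hypothesis. On the other, writing $e=a\,\xi+b\,\vec{T}$ with $a^2+b^2=1$ in the orthonormal frame $\set{\xi,\vec{T}}$ of $TP$, Proposition \ref{Prop:Cylinder} (recall $\alpha$ is a geodesic, so its geodesic curvature vanishes) gives $II_{P}(e,e)=-2\tau\,ab$, whence $\abs{II_{P}(e,e)}\leq|\tau(p)|(a^2+b^2)=|\tau(p)|$ since $2\abs{ab}\leq a^2+b^2$. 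Combining these,
$$ \sin\theta\,\kappa_g^{P}=II_{\Sigma}(e,e)-\cos\theta\,II_{P}(e,e)\geq II_{\Sigma}(e,e)-|\tau(p)|>0. $$
Thus $\kappa_g^{P}$ never vanishes and has the sign of $\sin\theta$; as transversality forces $\sin\theta\neq 0$ and $\theta$ varies continuously along the connected curve $C$, this sign is constant, so after fixing the orientation of $C$ we conclude that $C$ is strictly convex in $P$.

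The heart of the argument, and the step to treat with care, is the sharp bound $\abs{II_{P}(e,e)}\leq|\tau(p)|$: it is precisely this inequality that makes the threshold $|\tau|$ in the hypothesis natural, since a tilted direction in the vertical plane contributes normal curvature up to $|\tau|$, and the principal–curvature condition is exactly what dominates it. The remaining care is bookkeeping: fixing compatible orientations of the two orthonormal bases of the normal plane so that the rotation relation holds with a single angle $\theta$, and verifying that $\sin\theta$ keeps constant sign on each connected component of $\Sigma\cap P$.
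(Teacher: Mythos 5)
Your proof is correct, and it rests on the same two quantitative facts as the paper's: $II_\Sigma(\gamma',\gamma')\geq\min\set{k_1,k_2}>\abs{\tau}$ and, via Proposition \ref{Prop:Cylinder} with $k_g=0$, the sharp bound $\abs{II_P(\gamma',\gamma')}\leq\abs{\tau}$ (the paper compresses this second fact into the phrase ``since $k_i>\abs{\tau}$''; you spell it out, which is a genuine improvement in rigor). Where you diverge is the algebraic mechanism for comparing the two orthogonal decompositions of $\camb_{\gamma'}\gamma'$. The paper equates the \emph{squared norms} of the two decompositions,
\begin{equation*}
\norm{\nabla^P_{\gamma'}\gamma'}^2+II_P(\gamma',\gamma')^2=\norm{\nabla_{\gamma'}\gamma'}^2+II_\Sigma(\gamma',\gamma')^2,
\end{equation*}
and simply drops $\norm{\nabla_{\gamma'}\gamma'}^2\geq 0$ to get $\norm{\nabla^P_{\gamma'}\gamma'}^2\geq II_\Sigma^2-II_P^2>0$; this is angle-free and orientation-free, and in fact never uses transversality in the computation itself (transversality only guarantees that $\Sigma\cap P$ is a curve). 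You instead resolve the comparison through the contact angle $\theta$, projecting onto $N$ to get the linear relation $\sin\theta\,\kappa_g^P+\cos\theta\,II_P(\gamma',\gamma')=II_\Sigma(\gamma',\gamma')$; this costs you the orientation bookkeeping you rightly flag, and it makes essential use of $\sin\theta\neq 0$, but it buys more: you obtain the sign of $\kappa_g^P$ (tied to that of $\sin\theta$) and thereby an explicit argument that the geodesic curvature has constant sign along the connected component $C$ --- a point the paper leaves implicit, since it only proves $\nabla^P_{\gamma'}\gamma'\neq 0$ pointwise and tacitly uses continuity plus connectedness to pass to strict convexity. Both routes are sound; the paper's is shorter, yours is more informative.
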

\begin{proof}
Let us parametrize $C$ as $\gamma(t)$ where $t$ is arc length. Then
\begin{equation*}
\nabla_{\gamma'}^{P}\gamma' + II_{P}(\gamma', \gamma')N_{P} =
\overline{\nabla}_{\gamma'}\gamma'= \nabla_{\gamma'}\gamma'+ II(\gamma', \gamma')N
\end{equation*}
where $\nabla^{P}$, $\overline{\nabla}$ and $\nabla$ are the connections on $P$,
$\hm$ and $\Sigma$ respectively, $II_{P}$ and $II$ are the second fundamental forms
of $P$ and $\Sigma$ respectively, and $N_{P}$ and $N$ are the unit normal vector
fields along $P$ and $\Sigma$, respectively.

Taking inner product in the above equality,
\begin{equation*}
\|\nabla_{\gamma'}^{P}\gamma'\|^{2} + II_{P}(\gamma', \gamma')^{2}=
\|\nabla_{\gamma'}\gamma'\|^{2} + II(\gamma', \gamma')^{2} .
\end{equation*}

Thus,
\begin{equation}
\begin{split}
 \|\nabla_{\gamma'}^{P}\gamma'\|^{2} &= \|\nabla_{\gamma'}\gamma'\|^{2} +
II(\gamma', \gamma')^{2} - II_{P}(\gamma', \gamma')^{2} \\
 & \geq II(\gamma', \gamma')^{2} - II_{P}(\gamma', \gamma')^{2} > 0, \nonumber
\end{split}
\end{equation}
since $k_{i}> |\tau|$.
 Thus $\nabla_{\gamma'}^{P}\gamma' \neq 0$, that is, the
geodesic curvature of $C$ vanishes nowhere on $P$.
\end{proof}

\begin{definition}\label{Def:simpleend}
Let $\hm $ be a strict Hadamard-Killing submersion. Let $\Sigma \subset \hm$ be a
surface. We say that $\Sigma$ has a simple end if the boundary at infinity of
$\pi(\Sigma)$ is a unique point $\theta_{0}\in \mathbb{M}(\infty)$ and, in addition,
for all $\theta_{1}, \theta_{2} \in \mathbb{M}(\infty) \backslash \{\theta_{0}\}$
the intersection of the vertical plane $\pi^{-1}(\alpha)$ and $\Sigma$ is empty or a
compact set, where $\alpha$ is a geodesic joining $\theta_{1}$ to $ \theta_{2}$.
\end{definition}

At this point we have enough information to prove our main result.

\begin{theorem}\label{Theo:HS}
Let $ \Sigma \subset \hm$ be a complete connected immersed surface so that $k_{i}(p)
> |\tau (p)|$ for all $p\in \Sigma$, where $\hm$ is a strict Hadamard-Killing submersion.
Then $\Sigma$ is properly embedded. Moreover, $\Sigma$ is homeomorphic to
$\mathbb{S}^{2}$ or to $\mathbb{R}^{2}$. In the later case, $\Sigma$ has a simple
end or $\Sigma$ is a Killing graph over a convex domain of $\m$.
\end{theorem}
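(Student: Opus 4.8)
The plan is to adapt the Hadamard-Stoker-Currier strategy via moving planes to the Killing submersion setting, using the vertical-plane foliations established in Section~\ref{preliminaries} and the convexity of sections cut by vertical planes from Proposition~\ref{Pro:convex} as the basic tool. First I would establish a \emph{tangency/maximum principle}: because $\Sigma$ has principal curvatures $k_i(p)>|\tau(p)|\ge 0$, at any point $\Sigma$ is locally strictly convex, and vertical planes $P=\pi^{-1}(\alpha)$ have extrinsic curvature $K_e=-\tau^2\le 0$ by Proposition~\ref{Prop:Cylinder}, so $\Sigma$ and $P$ can never be tangent from the convex side; this is the analog of comparing principal curvatures against the ``umbilic'' value $|\tau|$ and it is exactly the content that makes Proposition~\ref{Pro:convex} sharp. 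Combined with the fact that along $\Sigma$ the principal curvatures are uniformly above $|\tau|$, a standard argument shows that each connected component of $\Sigma\cap P$ is not merely strictly convex but bounds, locally near each tangency, $\Sigma$ on one side of $P$.

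Next I would run the \emph{moving-plane / moving-foliation} argument. Using Lemma~\ref{Lem:folorthogonal}, take an oriented foliation of vertical planes $P_\beta(t)$ along a horizontal geodesic $\beta$ as in Definition~\ref{Def:folitaionplanes}, and push $t$ from $+\infty$ inward. The point at infinity $\partial_\infty\beta^{+}$ and the half-space structure let me start with a plane disjoint from a prescribed compact portion of $\Sigma$; as $t$ decreases I record the first contact. At the first interior tangency, the tangency principle forces $\Sigma$ to lie locally in the closed exterior of $P_\beta(t)$, and a connectedness/continuation argument (the set of parameters for which $\Sigma$ stays on one side is open and closed) shows that $\Sigma$ lies entirely in one side. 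Doing this for every direction $\beta$, and invoking the alternative foliation by geodesics issuing from a fixed point at infinity (Lemma~\ref{Lem:folinfinity}), I would conclude that $\Sigma$ intersects each vertical plane in at most one convex curve (or a single point), which yields \emph{embeddedness}; properness then follows because a non-proper complete surface would accumulate on itself and violate the one-sided tangency principle.

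For the \emph{topological classification}, once $\Sigma$ is properly embedded I would use its local convexity to show $\Sigma$ bounds a (locally) convex region and is hence homeomorphic to either $\s^2$ (when $\Sigma$ is compact) or $\r^2$ (when noncompact); the convexity forces $\Sigma$ to be simply connected, ruling out higher genus or ends of the wrong type. For the open case I would analyze $\pi(\Sigma)\subset\m$ and its boundary at infinity $\partial_\infty(\pi(\Sigma))$. Two possibilities arise: either $\pi$ restricted to $\Sigma$ is injective with convex image, giving the \emph{Killing graph over a convex domain} (using Definition~\ref{Def:Killinggraph} and the fact that a properly embedded, one-sided, convex end projects diffeomorphically), or the projection collapses so that $\partial_\infty(\pi(\Sigma))$ degenerates to a single point $\theta_0$, which by the vertical-plane intersection control gives precisely the \emph{simple end} of Definition~\ref{Def:simpleend}. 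The dichotomy is decided by whether $\pi(\Sigma)$ has a nondegenerate or a single-point boundary at infinity.

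The hard part will be the first-contact step of the moving-plane argument: in the $\r^3$ and $\h^3$ cases one exploits a large symmetry group and totally geodesic planes, but here the vertical planes have $K_e=-\tau^2<0$ (unless $\tau\equiv 0$), so they are genuinely saddle-shaped rather than flat, and the comparison at the tangency point must be carried out carefully using the full shape operator of $\Sigma$ against that of $P$ rather than a naive curvature comparison. Establishing the geometric tangency principle rigorously in this non-totally-geodesic, non-homogeneous ambient setting—where $\tau$ and $\kappa$ vary point to point—is the crux, and it is precisely where the hypothesis $k_i>|\tau|$ (rather than $k_i>0$) is indispensable.
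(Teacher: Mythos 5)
Your toolkit is the right one (Proposition \ref{Pro:convex}, the foliations of Definition \ref{Def:folitaionplanes}, and the comparison of the shape operator of $\Sigma$ with that of a vertical plane, whose principal curvatures are $\pm|\tau|$), but the core of your argument has a circularity that the paper is specifically built to avoid. Your moving-plane scheme --- start with a vertical plane ``at infinity'', sweep inward, and work at the \emph{first contact} with $\Sigma$ --- presupposes that a first contact exists, which requires $\Sigma$ to be closed as a subset of $\hm$, i.e.\ essentially properly embedded; but properness and embeddedness are precisely the conclusions of the theorem, and a priori $\Sigma$ is only a complete immersion (it could, at that stage, be non-proper, or have self-intersections that no first-contact argument detects). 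Likewise your closing step, ``properness follows because a non-proper surface would accumulate on itself,'' is not an argument: a non-proper surface accumulates somewhere in $\hm$, not necessarily on itself. The paper never sweeps in from infinity: it splits into Case 1 (no point of $\Sigma$ has horizontal normal), where a loop in a component of $\Sigma\cap P$ would force a vertical tangent, hence a zero of the angle function $\nu$, so each component is an embedded Killing graph and tracking components under the sweep exhibits $\Sigma$ as a Killing graph over a convex domain; and Case 2 (some $N(p_0)$ is horizontal), where the sweep \emph{starts at the vertical plane tangent to $\Sigma$ at $p_0$} and one follows the continuous family $C(t)$ of components through $p_0$, so the objects being moved exist by construction.

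The second, and larger, gap is that your proposal never confronts the possibility that the intersection curves become \emph{non-compact} during the sweep. This is the heart of the paper's proof (its Case B): when $C(t)$ first fails to be compact at some $\bar t$, one must show the limit curve is tilted in the sense of Definition \ref{Def:veticalcurve} (the paper's Claim 1, using strict convexity of the section inside the flat plane $P_{\beta}(\bar t)$, which is isometric to $\r^2$), deduce that its projection has exactly one point at infinity (Claim 2), and then run a second, transverse family of sweeps to rule out any further non-compact sections --- this is exactly what produces the simple end and the properness. Your dichotomy ``either $\pi|_\Sigma$ is injective with convex image, or $\partial_{\infty}\pi(\Sigma)$ is a single point'' restates the desired conclusion rather than proving it; without the non-compact-section analysis you cannot exclude a surface whose sections drift to infinity so as to produce two points at infinity and destroy embeddedness. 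Finally, your open-and-closed ``global one-sidedness'' claim is unjustified as stated: local one-sidedness at a tangency does not propagate by connectedness in a non-homogeneous ambient space with no reflectional symmetries, and the paper neither asserts nor needs that $\Sigma$ lies globally on one side of every tangent vertical plane.
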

\begin{proof}
The proof follows the ideas in \cite[Theorem 3.1]{EGR}. We distinguish two cases
depending on the existence of a point $p$ at $\Sigma$ so that $N(p)$ is horizontal,
that is, $N(p)$ is orthogonal to the fiber $\xi$.

{\bf Case 1:} {\it Suppose there is no point $p \in \Sigma$ so that $N(p)$ is
horizontal. Then, $\Sigma $ is embedded and homeomorphic to the plane. Moreover, it
is a Killing graph over a convex domain in $\m$.}

{\it Proof of Case 1:} For proving this case, we first show the following

{\bf Assertion 1:} Let $P$ be a vertical plane that meets transversally $\Sigma$,
then each connected component of $\Sigma \cap P$ is an open embedded strictly convex
curve. Moreover, each connected component is a Killing graph over an open interval
in $\alpha$, where $\alpha$ is the complete geodesic in $\m$ so that $P:=\pi
^{-1}(\alpha)$

{\it Proof of the Assertion 1:} We only need to show that each connected component
is embedded, since we already know that it is strictly convex by Proposition
\ref{Pro:convex}.

Let $C$ be a connected component of $\Sigma \cap P$ that is not embedded, then it
has a loop $L \subset C$ homeomorphic to a circle, i.e, there exists a homeomorphism
$c : \s ^1 \to L $ and $c' \neq 0$ except at one point. Clearly, there is a point $q
\in c(\s ^1)$ where $c'$ is vertical. Then $\nu (q)=0$ which is a contradiction.

%Let $\vec{n}$ be the inward normal vector field along $c$ in $P$. Let $X_i$,
%$i=1,2$, be two linearly independent horizontal vector fields along $c$. Since there
%is no point $p\in \Sigma$ so that $N(p)$ is horizontal, $\set{N,X_1,X_2}$ is a basis
%of $T_{c(t)}\hm$ for all $t \in \s ^1$. Moreover, since we are considering the
%inward orientation for $\vec{n}$ and the orientation for $N$ that makes positive
%both principal curvatures, we get
%$$ \vec{n} = a N + \sum _{i=1}^2 b_i X_i,$$where $a \geq 0$.
%
%Now, taking the inner product in the above equality with $\xi$, we obtain
%$$ \meta{\vec{n}}{\xi} = a \nu .$$
%
%So, since $c$ is a loop, there is a point at $c$ so that $\vec{n}$ points up and a
%point where it points down, i.e., there are $p_1,p_2 \in c $ so that
%$\meta{\vec{n}(p_1)}{\xi(p_1)}>0 $ and $\meta{\vec{n}(p_2)}{\xi(p_2)}<0 $. Thus, we
%have that $\nu (p_1)>0$ and $\nu (p_2)<0$. So, by continuity, there exists a point
%$q$ at $c$ so that $\nu (q)=0$, which contradicts our assumptions.

This argument also proves that a connected component can not be compact. Also, it
proves that each connected component is a Killing graph over $\alpha$, where
$\alpha$ is the complete geodesic in $\m$ so that $P:=\pi ^{-1}(\alpha)$. This
proves the Assertion 1.

\vspace{.5cm}

Now, let $P$ be a vertical plane which meets $\Sigma$ transversally and
$P_{\beta}(t)$ be the oriented foliation of vertical planes along $\beta$ (see
Definition \ref{Def:folitaionplanes}). From Assertion 1, each connected component is
an open embedded strictly convex curve. Let $C(0)$ be an embedded component of
$P_{\beta }\cap \Sigma$. Let us consider how $C(0)$ varies as $t$ increases to
$+\infty$. First, note that no two components of $P_{\beta}(0) \cap \Sigma$ can join
to the component $C(t_0)$ associated to $C(0)$ at some $t_0 >0$. Otherwise, the unit
normal vector field $N$ should point up in a component and down in the other for
$t_0 -\epsilon < t<t_0$ ($\epsilon$ small enough), since $N$ is globally defined.
Thus, by continuity, this would produce a point where $N$ is horizontal, a
contradiction. Hence, from Assertion 1, the component $C(0) \subset P_{\beta}(0)\cap
\Sigma$ varies continuously to one open embedded curve $C(t) \subset
P_{\beta}(t)\cap \Sigma$ as $t$ increases. The only change possible is that $C(t)$
goes to infinity as $t$ converges to some $t_1$ and disappears in $P_{\beta}(t_1)$.
Similarly $C(0)$ varies continuously to one embedded curve of $P_{\beta}(t) \cap
\Sigma$ as $t\to - \infty$. Hence $\Sigma$ connected yields $P_{\beta}(t) \cap
\Sigma $ is at most one component for all $t$. So, we observe that $P_{\beta}(t)
\cap \Sigma$ is empty or homeomorphic to $\r$ for each $t$, hence $\Sigma $ is
topologically $\r^2$. To finish, again from Assertion 1, we conclude $\Sigma$ is a
Killing graph. The fact that $\Sigma$ is a Killing graph over a convex domain
follows from Proposition \ref{Pro:convex}.

This completes the proof of Case 1.

\vspace{.5cm}

{\bf Case 2:} {\it Suppose there is a point $p_0 \in \Sigma$ so that $N(p_0)$ is
horizontal. Then, $\Sigma $ is embedded and homeomorphic to the sphere or to the
plane, in which case, $\Sigma $ has a simple end.}

By assumption $N$ is horizontal at $p_0$ and so, the tangent plane $T_{p_0} \Sigma$
is spanned by $\set{\xi (p_0) , X(p_0)}$, where $X(p_0)$ is horizontal. Set $\bar
p_0 := \pi (p_0)$ and $v := d\pi _{p_0} (X(p_0))$. Let $\alpha $ be the complete
geodesic in $\m$ with initial conditions $\alpha (0) = \bar p_0$ and $\alpha ' (0) =
v$. Set $P:=\pi ^{-1}(\alpha)$. Note that $p_0 \in P \cap \Sigma$ and the principal
curvatures of $\Sigma$ at $p_0$ are greater than the principal curvatures of $P$ at
$p_0$, thus $\Sigma$ lies (locally around $p_0$) on one side of $P$. Without loss of
generality we can assume that $N(p_0)$ points to $ext_{\mathcal{M}(\kappa,\tau)}(P)$
(see Definition \ref{Def:interiordomain}), therefore, $\Sigma$ lies (locally around
$p_0$) in $ext_{\mathcal{M}(\kappa,\tau)}(P)$. Moreover, we parametrize the boundary
at infinity by $B: [0,2\pi] \to \m (\infty)$ so that $B(0)= \alpha ^-$, $B(\pi)=
\alpha ^+$ and $\partial _{\infty} ext_{\mathcal{M}(\kappa,\tau)}(P) = B([0,\pi])$.
Also, from now on, we identify the points at infinity with the points of the
interval $[0,2\pi]$.

Let $N_P$ be the unit normal vector field along $P$ pointing into
$ext_{\mathcal{M}(\kappa,\tau)}(P)$. Then, there exists neighborhoods $V \subset P$
and $U \subset \Sigma$ so that
$$ U := \set{ {\rm exp}_q (f(q)N_P(q)) : \, q \in V } ,$$where
$f:V \to \r $ is a smooth function and ${\rm exp}$ is the exponential map in $\hm$.

Let $P_{\beta}(t)$ be the foliation of vertical planes along $\beta$ (see Definition
\ref{Def:folitaionplanes}). From Proposition \ref{Pro:convex} and the fact that
locally $\Sigma$ is (in exponential coordinates) a graph, there is  $\epsilon > 0$
such that the curves $P_{\beta}(t) \cap U$ are embedded strictly convex curves (in
$P_{\beta}(t)$) for $0<t<\epsilon$. Perhaps, $P_{\beta}(t) \cap \Sigma$ has other
components distinct from $C(t)$ for each $0<t < \epsilon$, but we only care how
$C(t)$ varies as $t$ increases. We also denote by $C(t)$ the continuous variation of
the curves $P_{\beta}(t)\cap \Sigma$ when $t> \epsilon$.

We distinguish two cases:

{\bf Case A:} {\it If $C(t)$ remains compact as $t$ increases, then $\Sigma$ is
properly embedded and homeomorphic to the sphere or to the plane. In the later case,
$\Sigma$ has a simple end}

{\it Proof of Case A:} By topological arguments, if $C(t)$ remains compact and
non-empty as $t$ increases, then the $C(t)$ remains embedded. So, $C(t)$ is either
embedded compact strictly convex curves for all $t$, or embedded compact strictly
convex curves until $\bar t$ and this component either it becomes a point, or it
drifts off to infinity.

If $C(t)$ remains compact and non-empty as $t \rightarrow +\infty$, then since
$\Sigma$ is connected, $\Sigma$ must be embedded. In addition, because $C(0)$ is a
point and $C(t)$ is homeomorphic to a circle for every positive $t$, $\Sigma$ is
homeomorphic to $\r^2$.

Now, from the fact that $C(t)$ remains compact, then
$$\partial _{\infty} \pi(\Sigma) = \{B(\theta_0)\},$$where $B(\theta _0) = \beta ^+
$, and $\Sigma $ has a simple end.

If there exists $\bar{t} > 0 $ such that $C(t)$ are compact for all $0 < t <
\bar{t}$ and the component $C(t)$ disappears for $t > \bar{t}$, then, $\Sigma$
connected yields that it is either compact, embedded and topologically $\s ^2$ or
non compact, embedded and topologically $\r^2$. That is, if the $C(t)$ converge to a
non empty compact set as $t$ converges to $\bar{t}$ then $C(\bar{t})$ must be a
point (because our surface has no boundary) and $\Sigma$ is a sphere. Otherwise the
$C(t)$ drift off to infinity as $t$ converges to $\bar{t}$ and $\Sigma$ is
topologically a plane.

We now show that in the latter case, the vertical projection $\pi$ of $\Sigma$ has
asymptotic boundary one of the two points at infinity of $\pi(P_{\beta}(\bar{t}))$.

Without lost of generality we can assume that $P_{\beta}(\bar{t})= \pi
^{-1}\left(\gamma \right)$, $\gamma = \set{\gamma ^{-}, \gamma ^{+}}$ where
$B(\theta ^-) = \gamma ^{-}$ and $B(\theta ^{+}) = \gamma ^+$.  Note that $\theta ^-
\in (0,\theta _0)$ and $\theta ^+ \in (\theta _0 , \pi)$. Consider the vertical
plane $Q= \pi ^{-1}(\beta )$. Let $\tilde{C}$ be the component of $Q \cap \Sigma$
containing $p_0$. First observe that $\tilde{C}$ is compact, otherwise it would
intersect $\pi ^{-1}(r)$, where $r:= \pi \left(Q \right)\cap \pi \left(
P_{\beta}(\bar{t})\right)\in \m$, in two points, which is not the case. Thus, we can
consider the disk $\tilde{D}$ bounded by $\tilde{C}$ on $\Sigma$.

Let $Q_{\gamma}(t)$ denote the foliation by vertical planes along $\gamma$,
$Q_{\gamma}(0)=Q$. There exists $t_0$ (we can assume $t_0<0$) satisfying
$Q_{\gamma}(t_0)$ touches $\tilde{D}$ on one side of $\tilde{D}$ by compactness. Let
$q_0\in \tilde{D}\cap Q_{\gamma}(t_0)$ be the point where they touch. Consider the
variation $\tilde{C}(t)$ of $q_0$ on $\Sigma \cap Q_{\gamma }(t)$ from $t=t_0$ to
infinity. Then, $\tilde{C}(t)$ is a convex embedded curve for $t$ in a maximal
interval $ (t_0,\bar{t}_0)$ with $0<\bar{t}_0\leq\infty$. Hence, $\Sigma$ is
foliated by the $\tilde{C}(t)$, $\tilde{C}=\tilde{C}(0)=Q \cap \Sigma$ and $\theta
^-\not\in\partial_{\infty}\pi(\Sigma)$ because $\Sigma$ is on one side of
$Q_{\gamma}(t_0)$.

Now, we will show that $\partial_{\infty}\pi (\Sigma )=\set{B(\theta ^+)}$. Let
$\gamma (\theta):= \gamma (\theta ^*,\theta) $ where $B(\theta ^*)=\beta ^- $ (see
Definition \ref{Def:orientedgeod}), for $\theta \in [0 ,\pi]$. Let $\bar{\theta}$ be
the value of $\theta$ such that $\gamma (\bar{\theta})$ is asymptotic to $\gamma
^{+}$. Let $Q(\theta) = \pi ^{-1}\left(\gamma (\theta )\right)$. For each $\theta$,
$\bar{\theta} < \theta \leq \pi$, we have $\Sigma \cap Q(\theta)$ is one connected
embedded compact curve $C'(\theta)$. The proof of this is the same as the previous
one for $\tilde{C} $. Notice that each $C'(\theta)$ is non empty since $p_0 \in
C'(\theta)$.

Now $C'(\bar{\theta})$ can not be compact, otherwise $\Sigma $ could not be
asymptotic to the plane $P_{\beta} (\bar{t})$, a contradiction.

In order to complete the proof, we show that $\Sigma$ has a simple end. Observe that
$C'(\theta)$ is compact, $\bar{\theta}<\theta<\theta _0$ because $\Sigma
=\cup_{0\leq t<\bar{t}} C(t)$. Moreover, $C'(\theta)\subset \tilde{D}$,
$0<\theta<\theta _0$, and $ \tilde{D}$ is compact. Thus, it is easy to conclude that
$\Sigma$ has a simple end.

Thus we have proved that $\Sigma $ is either a properly embedded sphere or $\Sigma $
is a properly embedded plane with a simple end. This proves Case A.

\vspace{.5cm}

{\bf Case B:} {\it If $C(t)$ becomes non-compact, then $\Sigma$ is a properly
embedded plane with a simple end.}

{\it Proof of Case B:} Let $\bar{t} >0$ be the smallest $t$ with $C(\bar{t})$
non-compact, $C(\bar{t})$ the limit of the $C(t) $ as $t \rightarrow \bar{t}$,
$C(\bar{t})$ is an embedded strictly convex curve in $P_{\beta}(\bar{t})$.

\vspace{.2cm}

\textbf{Claim 1:} {\it $C(\bar{t})$ is tilted (see Definition
\ref{Def:veticalcurve})}.

{\it Proof of Claim 1:} Let us assume that $C(\bar{t})$ is untilted, then there is
point $q \in C(\bar{t})$ so that $\pi ^{-1} (\bar q)$ touches once to $C(\bar t)$,
where $\bar q :=\pi(q)$. First of all, note that $ \tilde{\Sigma} = \bigcup _{0\leq
t \leq \bar{t}} C(t) \subset \Sigma $ is embedded. Let $\Gamma _{\bar p \bar q }$ be
the complete horizontal geodesic (in $\m$) joining $\bar p$ and $\bar q$. Let $ Q =
\pi ^{-1}\left(\Gamma _{\bar p \bar q}\right)$, and consider $\pi^{-1}(r_0)$ where
$r_0 :=\pi \left( Q \right) \cap \pi \left( P_{\beta} (0)\right)$ and $\pi
^{-1}(r_{\bar{t}})$ where $r_{\bar t}:=\pi \left( Q \right)\cap\pi \left(
P_{\beta}(\bar{t})\right)$. Note that $\pi^{-1}(r_0)$ and $\pi^{-1}(r_{\bar{t}})$
are parallel lines in $Q$. Also, $\alpha _Q = Q \cap \tilde{\Sigma}$ is a
non-compact embedded strictly convex curve in $Q$ such that $\pi ^{-1}(r_0)$ is
tangent to $\alpha _Q$ at $p_0 \in \alpha _Q $ and $\alpha _Q \cap \pi
^{-1}(r_{\bar{t}})$ is exactly one point, since $C(\bar{t})$ is untilted. But this
is a contradiction because $\alpha _Q$ is a strictly convex curve in $Q$, which is
isometric to $\r ^2$, and it must intersect $\pi ^{-1}(r_{\bar{t}})$ twice. Thus,
$C(\bar{t})$ is tilted.

\vspace{.2cm}

And we claim that

\vspace{.2cm}

\textbf{Claim 2:} {\it $\bi \pi (C(\bar{t}))$ is one point.}

{\it Proof of Claim 2:} Let us denote by $D(t)$ the convex body bounded by $C(t)$ in
$P_{\beta}(t)$ for each $0 < t < \bar{t}$. Thus, the limit, $D(\bar{t})$, of $D(t)$
as $t$ increases to $\bar{t}$ is an open convex body bounded by $C(\bar{t})$ in
$P_{\beta}(\bar{t})$ which is isometrically $\r ^2$. If $\bi \pi (C(\bar{t}))$ has
two points, the only possibility is that $C(\bar{t})$ is untilted, which is
impossible by Claim 1.

\vspace{.2cm}

Set $P_{\beta}(\bar{t})= \pi ^{-1}(\gamma )$, $\gamma = \set{\gamma ^{-}, \gamma
^{+}}$ where $B(\theta ^-) = \gamma ^{-}$ and $B(\theta ^{+}) = \gamma ^+$. Note
that $\theta ^- \in (0,\theta _0)$ and $\theta ^+ \in (\theta _0 , \pi)$. From Claim
2, we may assume that $\bi \pi (C(\bar{t})) = \set{B(\theta ^-)}$.

Let $\delta _0> 0 $ and $t_{\delta _0} < \bar{t}$ such that $P_{\beta}(t_{\delta
_0}) = \pi ^{-1}\left(\Gamma (\delta _0) \right)$ where $\Gamma (\delta _0) :=
\set{B(\theta ^- -\delta _0), B(\theta ^+ + \delta _0)}$ (we may assume this by
choosing $B$ in the right way). We denote by $\tilde{\Sigma}_1 =\bigcup _{0\leq t
\leq t_{\delta _0}} C(t)\subset \Sigma $ and note that $\tilde{\Sigma}_1$ is
connected and embedded.

Let us consider the complete horizontal geodesic given by $\Gamma (\delta _0
,s):=\set{B(\theta ^- - \delta _0 + s), B(\theta ^+ +\delta _0)}$ and the vertical
plane $Q(s)=\pi^{-1}\left(\Gamma (\delta _0,s)\right)$ for each $0 \leq s \leq
\theta ^+ -\theta ^-$ (note that $Q(s)$, for $0<s<\theta^+- \theta ^-$ is a
foliation of $ext_{\mathcal{M}(\kappa,\tau)}(Q(0))$). So, $Q(0) =
P_{\gamma}(t_{\delta _0})$ and $Q(0) \cap \tilde{\Sigma}_1 = C(t_{\delta _0})$ is an
embedded compact strictly convex curve. Let us consider how $\alpha (s) = Q(s) \cap
\Sigma $ varies as $s$ increases to $\theta ^+ - \theta ^- -\delta _0$. At this
point, we have two cases:
\begin{enumerate}
\item {\it If $\alpha (s) $ remains compact for all $ 0 \leq s < \theta ^+ - \theta ^- -\delta
_0$, then $\Sigma $ is properly embedded, homeomorphic to the plane and has a simple
end.}

In this case, letting $\delta _0\rightarrow 0$, falls into Case A. So, it is easy to
realize that $\Sigma $ is properly embedded, homeomorphic to the plane and has a
simple end at $B(\theta ^+) \in \m (\infty)$.

\item  {\it $\alpha (s) $ can not become non-compact.}

Let us assume that $\alpha (s)$ becomes non-compact. Let $0 <\bar{s} <\theta ^+ -
\theta ^- -\delta _0$ be the smallest $s$ with $\alpha(\bar{s})$ non-compact,
$\alpha (\bar{s})$ is the limit of the $\alpha(s) $ as $s \rightarrow \bar{s}$.
Also,
\begin{equation*}
\bi \pi (\alpha (\bar{s})) = \{ B(\theta ^- -\delta_0 + \bar{s}) \},
\end{equation*}
otherwise it must be $\set{B(\theta ^+ +\delta _0)}$ which contradicts that
$C(t_{\delta _0})$ is compact.

Clearly $\delta_0<\bar{s}$. For each $\delta \leq \delta _0$ we consider the
complete horizontal geodesic given by $ \sigma (\delta) = \set{ B(\theta ^-
-\delta_0 + \bar{s} -\delta) , B(\theta ^+ +\delta)} $ and the vertical plane
$T(\delta) = \pi ^{-1}\left(\sigma (\delta) \right)$. Let us denote by
$\tilde{\Sigma} _2=\bigcup _{0\leq s \leq \bar{s}-2\delta _0} \alpha (s) \subset
\Sigma $ and note that $\tilde{\Sigma }_2$ is connected and embedded, so,
$\tilde{\Sigma }=\tilde{\Sigma}_1 \cup \tilde{\Sigma}_2 \subset \Sigma$ is connected
and embedded. For each $\delta$, $ 0 < \delta \leq \delta _0 $, $E ( \delta ) = T(
\delta ) \cap \tilde{\Sigma}$ is a strictly convex compact embedded curve in
$T(\delta)$. As $\delta \rightarrow 0$, theses curves converge to a convex curve in
$T( 0 )$ with $\partial _{\infty} \pi ( E( 0 ) )$ the two points $\set{ B(\theta ^-
-\delta_0 + \bar{s}) , B(\theta ^+ )}$. This contradicts Claim 2. Hence $\alpha (s
)$ can not become non-compact.
\end{enumerate}

This proves Claim B, and so Theorem \ref{Theo:HS}.
\end{proof}

In particular, we can recover (and generalize) the afore mentioned \cite[Theorem
3.1]{EGR}.

\begin{corollary}\label{Cor:prodnegative}
Let $\Sigma$ be a complete connected immersed surface in $\m  \times \r$ with
positive extrinsic curvature, where $\mathbb{M}^{2}$ is a Hadamard surface with
Gaussian curvature bounded above by a negative constant. Then $\Sigma$ is properly
embedded and bounds a strictly convex domain in $\m \times \r$. Moreover, $\Sigma$
is homeomorphic either to $\mathbb{S}^{2}$ or to $\mathbb{R}^{2}$. In the later
case, $\Sigma $ is either a graph over a convex domain in $\m$ or $\Sigma$ has a
simple end.
\end{corollary}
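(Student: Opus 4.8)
The plan is to recognize $\mr$ as a strict Hadamard-Killing submersion with $\tau\equiv 0$ and then apply Theorem \ref{Theo:HS} directly, reserving the one genuinely new assertion---that $\Sigma$ bounds a strictly convex domain---for a separate geodesic-slicing argument. First I would observe that for a Riemannian product the vertical unit Killing field $\xi=\partial _t$ is parallel, so $\camb _X\xi=0$ and Proposition \ref{Prop:tau} forces $\tau\equiv 0$. Lemma \ref{Lem:SectK} then gives $\bar K(X,Y)=\kappa\le c<0$ and $\bar K(X,\xi)=\tau^2=0$, so $\mr$ is itself a Hadamard $3$-manifold: complete, simply connected and nonpositively curved. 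Positive extrinsic curvature $K_e=k_1k_2>0$ means the two principal curvatures are nonzero of the same sign, so after fixing the globally defined normal $N$ as at the opening of Section \ref{HSTheorem} we have $k_i(p)>0=|\tau(p)|$ for every $p$. Theorem \ref{Theo:HS} applies verbatim and yields that $\Sigma$ is properly embedded, homeomorphic to $\s^2$ or to $\r^2$, and in the latter case either a Killing graph over a convex domain of $\m$ or a surface with a simple end.

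It remains to prove that $\Sigma$ bounds a strictly convex domain $W$. Since $\Sigma$ is properly embedded in the simply connected manifold $\mr$, it separates $\mr$ into two components; let $W$ be the one into which $N$ points, which is the convex side because $k_i>0$ means $\Sigma$ curves toward $N$. To establish geodesic convexity of $W$ I would exploit that every geodesic $\sigma$ of $\mr$ projects to a geodesic of $\m$: geodesics of a product $\m\times\r$ have the form $(\gamma(s),as+b)$ with $\gamma$ a geodesic of $\m$, so $\sigma$ lies in the vertical plane $P=\pi^{-1}(\gamma)$, which is totally geodesic precisely because $\tau\equiv 0$ (Proposition \ref{Prop:Cylinder}). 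Given $p,q\in W$, the geodesic $\sigma$ joining them sits in such a totally geodesic $P\cong\r^2$, and by Proposition \ref{Pro:convex} (with $\tau\equiv 0$) each component of $\Sigma\cap P$ is a strictly convex curve in this flat plane. Hence the planar region $W\cap P$ is bounded by strictly convex curves, is therefore convex, and so $\sigma\subset W\cap P\subset W$, giving strict convexity of $W$.

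The main obstacle is exactly the last step: verifying that $W\cap P$ is a single convex region rather than a union of convex pieces. Local strict convexity of each component of $\Sigma\cap P$ controls the slice only locally, so to conclude globally I would feed back the topological information already extracted from Theorem \ref{Theo:HS}. When $\Sigma\cong\s^2$ the transversal slice $\Sigma\cap P$ is a single embedded closed curve bounding a convex disk in $P$; when $\Sigma\cong\r^2$ the graph-versus-simple-end description shows $\Sigma\cap P$ is a single strictly convex arc or curve whose bounded side is $W\cap P$. Combining this connectedness with the strict convexity from Proposition \ref{Pro:convex} pins down $W\cap P$ as the convex body bounded by that curve; a limiting argument covers the non-transversal planes. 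The homeomorphism type and the graph-versus-simple-end dichotomy stated in the corollary are then inherited directly from Theorem \ref{Theo:HS}.
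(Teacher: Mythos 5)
Your proposal is correct and takes essentially the same route as the paper: reduce to Theorem \ref{Theo:HS} by noting $\tau\equiv 0$ in a product (so $K_e>0$ is equivalent to $k_i>|\tau|$ after fixing the normal), and then obtain the strictly convex domain from the fact that vertical planes in $\m\times\r$ are totally geodesic combined with Proposition \ref{Pro:convex}. The only difference is that you spell out details the paper leaves implicit (geodesics of a product lie in vertical planes, connectedness of the slices $W\cap P$ via the structure established in the proof of Theorem \ref{Theo:HS}), which is a faithful elaboration rather than a different argument.
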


In a product space $\tau =0$, and so, since the extrinsic curvature is the product
of the principal curvatures, it is enough to ask that the extrinsic curvature is
positive. Moreover, the assertion about that $\Sigma$ bounds a strictly convex
domain follows from the fact that, in a product space, the vertical planes are
totally geodesics and Proposition \ref{Pro:convex}.

\begin{remark}
Corollary \ref{Cor:prodnegative} is sharp in the sense that there exists complete
embedded surfaces with positive extrinsic curvature and a simple end in $\h ^2
\times \r$ (see \cite[Section 4]{EGR}). Moreover, vertical cylinder in a product
space has zero extrinsic curvature.
\end{remark}

\begin{remark}
Moreover, Theorem \ref{Theo:HS} can be applied to surfaces in $\widetilde{{\rm
PSL}(2,\r)}$ whose principal curvatures are greater than the curvature of the fiber
$\tau$. It would be interesting to investigate the existence of examples in
$\widetilde{{\rm PSL}(2,\r)}$ with principal curvatures greater that the curvature
of the fiber $\tau$ and a simple end.
\end{remark}

\end{document}